\newtheorem{theorem}{Theorem}[section]
\newtheorem{lemma}[theorem]{Lemma}
\newtheorem{corollary}[theorem]{Corollary}
\newtheorem{remark}[theorem]{Remark}
\newcommand{\T}{\mathrm}
\newcommand{\F}{\mathbb{F}}
\newcommand{\Z}{\mathbb{Z}}
\newcommand{\fm}{\frak{m}}
\newcommand{\fn}{\frak{n}}
\begin{document}
\author[M.R. Khorsandi and S.R. Musawi]
{Mahdi Reza ~Khorsandi$^*$ and Seyed Reza ~Musawi}

\title[Nonorientable genus]
{On the nonorientable genus of the generalized unit and unitary Cayley graphs of a commutative ring}
\subjclass[2010]{13M05, 05C10, 05C25} \keywords{Unit graph, unitary Cayley graph, co-maximal graph, projective graph, nonorientable genus.}
\thanks{$^*$Corresponding author}
\thanks{E-mail addresses: khorsandi@shahroodut.ac.ir and r\_musawi@shahroodut.ac.ir}
\maketitle

\begin{center}
{\it Faculty of Mathematical Sciences, Shahrood University of Technology,\\
P.O. Box 36199-95161, Shahrood, Iran.}
\end{center}
\vspace{0.4cm}
\begin{abstract}
 Let $R$ be a commutative ring and let $U(R)$ be multiplicative group of unit elements  of $R$. In 2012,  Khashyarmanesh et al. defined generalized unit and unitary Cayley graph,
 $\Gamma(R, G, S)$, corresponding to a  multiplicative subgroup $G$ of $U(R)$ and a non-empty subset $S$ of $G$ with $S^{-1}=\{s^{-1} \mid s\in S\}\subseteq S$, as the graph with vertex set $R$ and two distinct vertices $x$ and $y$ are adjacent if and only if there exists $s\in S$ such that $x+sy \in G$. In this paper, we characterize all Artinian rings $R$ whose $\Gamma(R,U(R), S)$ is projective. This leads to determine all Artinian rings  whose unit graphs, unitary Cayley garphs and  co-maximal graphs are projective.
Also,  we prove that for an  Artinian ring $R$ whose  $\Gamma(R, U(R), S)$ has  finite  nonorientable genus, $R$ must be a finite ring. Finally, it is proved that for a given positive integer $k$,  the number of finite rings $R$ whose $\Gamma(R, U(R), S)$  has nonorientable genus $k$ is finite.
\end{abstract}
\vspace{0.5cm}
\section{Introduction}
All rings considered in this paper are  commutative rings with non-zero identity.
We denote the ring of integers module $n$ by $\Z_n$ and the finite field with $q$ elements by $\F_q$.
Let $R$ be a ring.  We use $Z(R)$, $U(R)$ and $J(R)$ to denote the set of zero-divisors of $R$,   the set of units of $R$ and the Jacobson radical of $R$, respectively.

The idea of associating a graph to a commutative ring was introduced  by Beck in \cite{Beck}. The relationship between ring theory and graph theory has received significant attention in the literature.  After introducing the {\it zero-divisor graph} by Beck, the authors assigned  the other graphs to  a commutative ring. Sharma and Bhatwadekar in \cite{Comaximal}, defined the {\it co-maximal graph} on $R$ as the graph whose vertex set is $R$ and two distinct vertices $x$ and $y$ are adjacent if and only if $Rx+Ry=R$. Afterward, in \cite{Unit} (resp., in \cite{Unitary}), the authors defined the {\it unit} (resp., {\it unitary Cayley}) {\it graph} , $G(R)$ (resp., $\mathrm{Cay}(R, U(R))$), with vertex set $R$ and two distinct vertices $x$ and $y$ are adjacent if and only if $x+y \in U(R)$ (resp., $x-y \in U(R)$).
The unit and unitary Cayley graph were generalized in \cite{Generalized+unit} as follows. The {\it generalized unit and unitary Cayley graph},
$\Gamma(R, G, S)$, corresponding to a  multiplicative subgroup $G$ of $U(R)$ and a non-empty subset $S$ of $G$ with $S^{-1}=\{s^{-1} \mid s\in S\}\subseteq S$, is the graph
with vertex set $R$ and two distinct vertices $x$ and $y$ are adjacent if and only if there exists $s\in S$ such that $x+sy \in G$. If we omit the word ``distinct'', the corresponding graph is denoted by $\overline{\Gamma}(R,G,S)$. Note that the graph $\Gamma(R,G,S)$ is a subgraph of the  co-maximal graph. For simplicity of notation, we denote $\Gamma(R,U(R),S)$ (resp., $\overline{\Gamma}(R,U(R),S)$) by $\Gamma(R,S)$ (resp., $\overline{\Gamma}(R,S)$).

The {\it genus}, $\gamma(\Gamma)$, of a finite simple graph $\Gamma$ is the minimum non-negative integer $g$ such that $\Gamma$ can be embedded in the sphere with $g$ handles. The {\it crosscap number} ({\it nonorientable genus}), $\widetilde{\gamma}(\Gamma)$, of a finite simple graph $\Gamma$ is the minimum non-negative integer $k$ such that $\Gamma$ can be embedded in the sphere with $k$ crosscaps. The genus (resp., nonorientable genus)  of an infinite graph $\Gamma$ is the supremum of genus (resp., nonorientable genus) of its finite subgraphs (see \cite {White,  Mohar}). The problem of finding the genus of a graph is NP-complete (see \cite{Thomassen}). However, the genus
of graphs that can be embedded in the projective plane can be computed in polynomial time (see \cite{Genus+projective}).

 A genus $0$ graph is called {\it planar graph} and a nonorientable genus $1$ graph is called a {\it projective graph}. In \cite{Genus+comaximal}, H.-J. Wang characterized  all finite rings  whose co-maximal graphs have genus at most one.  Also, H.-J. Chiang-Hsieh in \cite{Projective+zero} determined all finite rings with projective zero-divisor graphs. Similar results are established for total graphs in \cite{Projective+total, Genus+gtotal, Genus+total, Genus2+total}. Planar unit and unitary Cayley graphs were investigated  in \cite{Unit, Unitary, Planar+unitary+infinite, Planar+unit+infinite}. Also, Khashyarmanesh et al. in \cite{Generalized+unit} characterized all finite rings $R$ in which $\Gamma(R,S)$ is planar. Recently, Asir et al. in \cite{Genus+one+generalized+unit,Genus+two+generalized+unit},  determined all finite rings $R$ whose $\Gamma(R,S)$ has genus at most two.
Moreover, finite rings with higher genus unit and unitary Cayley graphs were investigated in \cite{Higher+genus+unit, Genus+unit} and \cite{Higher+genus+unitary}, respectively. In this paper, we characterize all Artinian rings $R$ whose $\Gamma(R,S)$ is projective. This leads to determine all Artinian rings  whose unit graphs, unitary Cayley graphs and  co-maximal graphs are projective.  Also,  we prove that for  an   Artinian ring $R$  with  $\widetilde{\gamma}(\Gamma(R,S))<\infty$, $R$ must be a finite ring. Finally, it is proved that for a given positive integer $k$, the number of finite rings $R$ such that $\widetilde{\gamma}(\Gamma(R,S))=k$ is finite.

\section{Preliminaries}
For a graph $\Gamma$, $V(\Gamma)$ and $E(\Gamma)$ denote the vertex set and edge set of $\Gamma$, respectively. The {\it degree}  of a vertex $v$ in the graph $\Gamma$, denoted by $\T{deg}(v)$, is the number of edges of $\Gamma$ incident with $v$, with each loop at $v$ counting as two edges.
The {\it minimum degree} of $\Gamma$ is the minimum degree among the vertices of $\Gamma$ and is denoted by $\delta(\Gamma)$. A {\it complete graph}
 $\Gamma$ is a simple graph such that all vertices of $\Gamma$ are adjacent. In addition, $K_n$ denotes a complete graph with $n$ vertices. A graph $\Gamma$ is called {\it bipartite} if $V(\Gamma)$ admits a partition into two classes such that the vertices in the same partition class must not be adjacent. A simple bipartite graph in which every two vertices from different partition classes are adjacent is called a {\it complete bipartite graph}, denoted by $K_{m,n}$, where $m$ and $n$ are size of the partition classes. Two simple graphs $\Gamma$ and $\Delta$ are said to be {\it isomorphic}, and written by $\Gamma \cong \Delta$, if there exists a bijection $\varphi:V(\Gamma)\rightarrow V(\Delta)$ such that  $xy \in E(\Gamma)$ if and only if $\varphi(x)\varphi(y) \in E(\Delta)$ for all $x, y \in V(\Gamma)$. A graph $\Gamma$ is called {\it connected} if any two of its vertices are
linked by a path in $\Gamma$. A maximal connected subgraph of $\Gamma$ is called a component of $\Gamma$.

A {\it subdivision} of a graph $\Gamma$ is a graph that can be obtained from $\Gamma$ by replacing (some or all) edges by paths. Two graphs are said to be {\it homeomorphic} if both can be obtained from the same graph by subdivision. Let $\Gamma_1$ and $\Gamma_2$ be two graphs without multiple edges. Recall that the {\it tensor product} $\Gamma=\Gamma_1 \otimes \Gamma_2$ is a graph with vertex set $V(\Gamma)=V(\Gamma_1)\times V(\Gamma_2)$ and two distinct vertices $(u_1,u_2)$ and $(v_1,v_2)$ of $\Gamma$ are adjacent if and only if $u_1v_1 \in E(\Gamma_1)$ and $u_2v_2 \in E(\Gamma_2)$. We refer the reader to \cite{Murty} and \cite{White} for general references on graph theory.

The following results give us some useful information about  nonorientable genus of a graph.
\begin{lemma}\label{l1}{\rm (\cite[Chapter 11]{White})} The following statements hold:
  \begin{itemize}
  \item[(a)] Let $G$ be a graph. Then $\widetilde{\gamma}(G) \leq 2\gamma(G)+1$.
  \item[(b)] If $H$ is a subgraph of $G$, then $\widetilde{\gamma}(H) \leq \widetilde{\gamma}(G)$.
  \item[(c)]$ \widetilde{\gamma}(K_n)= \left \{
      \begin {array}{cl}
        \lceil \frac{1}{6}(n-3)(n-4)\rceil& \T{\it if}\  n\geq3 \ \T{\it and}\  n\neq 7,\\
        3&\T{\it if}\ n=7.
      \end{array} \right.$\\
      In particular, $\widetilde{\gamma}(K_n)=1$ if $n=5,6$.
  \item[(d)]$\widetilde{\gamma}(K_{m,n})=\lceil \frac{1}{2}(m-2)(n-2)\rceil$ if $m,n\geq2$.\\
  In particular, $\widetilde{\gamma}(K_{3,3})=\widetilde{\gamma}(K_{3,4})=1$ and $\widetilde{\gamma}(K_{4,4})=2$.

  \end{itemize}
\end{lemma}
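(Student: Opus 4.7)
The lemma is a compilation of standard facts from topological graph theory, so my plan is to outline where each part comes from, flagging the nontrivial inputs rather than reconstructing long classical arguments.

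Part (b) is immediate: any cellular embedding of $G$ in a surface $\Sigma$ restricts to an embedding (not necessarily cellular) of the subgraph $H$ in $\Sigma$, so $\widetilde{\gamma}(H)\le \widetilde{\gamma}(G)$. For part (a), the key topological input is the classical homeomorphism $S_g \,\#\, N_1 \cong N_{2g+1}$, i.e., attaching a crosscap to the orientable surface of genus $g$ yields the nonorientable surface with $2g+1$ crosscaps. Given an orientable embedding of $G$ in $S_g$ witnessing $\gamma(G)=g$, I would pick any face, remove an open disk lying in its interior (disjoint from $G$), and identify antipodal points on the resulting boundary circle. This produces an embedding of $G$ in $S_g\,\#\,N_1 \cong N_{2g+1}$, yielding $\widetilde{\gamma}(G)\le 2\gamma(G)+1$.

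For the lower bounds in (c) and (d), the tool is Euler's formula together with the girth. If $G$ embeds in $N_k$ with $V$ vertices, $E$ edges and $F$ faces, then $V-E+F = 2-k$. In a simple-graph embedding every face boundary has at least $3$ edges, so $2E \ge 3F$, giving $k \ge 2 - V + E/3$. Plugging in $V=n$ and $E=\binom{n}{2}$ for $K_n$ yields $\widetilde{\gamma}(K_n)\ge \lceil (n-3)(n-4)/6\rceil$. For bipartite graphs every face boundary has at least $4$ edges, so $2E\ge 4F$; applied to $K_{m,n}$ this gives $\widetilde{\gamma}(K_{m,n})\ge \lceil(m-2)(n-2)/2\rceil$. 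The matching upper bounds are the substantial part: these are Ringel's theorems, proved by constructing explicit triangular (respectively quadrilateral) embeddings of $K_n$ (respectively $K_{m,n}$) in the corresponding nonorientable surfaces, via current graph / voltage graph techniques organized by the residue class of $n$ (or $m,n$) modulo suitable integers.

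The main obstacle, and the only nontrivial work, is precisely these Ringel constructions, together with the exceptional case $n=7$: the Euler bound gives $\widetilde{\gamma}(K_7)\ge 2$, but no triangular embedding of $K_7$ exists in $N_2$, so one must show by a separate combinatorial argument that $\widetilde{\gamma}(K_7)=3$ and produce an explicit embedding in $N_3$. Since all of this is carried out in \cite{White}*{Chapter 11}, my "proof" would consist of the Euler-formula lower bounds above, the connected-sum argument for (a), the trivial restriction argument for (b), and an explicit citation of Ringel's constructions and the $K_7$ analysis for the upper bounds in (c) and (d).
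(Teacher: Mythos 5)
The paper offers no proof of this lemma at all—it is quoted verbatim as known results from \cite[Chapter 11]{White}—and your outline (restriction of embeddings for (b), adding a crosscap to an orientable embedding via $S_g\#N_1\cong N_{2g+1}$ for (a), Euler-formula lower bounds plus Ringel's explicit embeddings and the separate $K_7$ analysis for (c) and (d)) is exactly the standard argument recorded there, with the heavy lifting correctly delegated to the same source. So your proposal is correct and consistent with the paper's treatment.
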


\begin{lemma}\label{l2}{\rm (\cite[Theorem 1 and Corollary 3]{Stahl})} Let $G$ be a graph with components $G_1,G_2, \cdots, G_n$. If for all $i=1,  \dots, n$, $\widetilde{\gamma}(G_i)>2\gamma(G_i)$, then
$$\widetilde{\gamma}(G)=1-n+\sum_{i=1}^n\widetilde{\gamma}(G_i),$$
otherwise,
$$\widetilde{\gamma}(G)=2n-\sum_{i=1}^n \mu(G_i),$$
where $\mu(G_i)=\mathrm{max}\{2-2\gamma(G_i), 2- \widetilde{\gamma}(G_i)\}$.
\end{lemma}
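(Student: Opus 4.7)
The plan is to prove both formulas via Euler-characteristic bookkeeping on connected-sum decompositions of surfaces. The key observation is that $\mu(G_i) = \max\{2 - 2\gamma(G_i),\, 2 - \widetilde{\gamma}(G_i)\}$ is precisely the largest Euler characteristic of a closed surface into which $G_i$ admits a $2$-cell embedding, and that the hypothesis $\widetilde{\gamma}(G_i) > 2\gamma(G_i)$ selects exactly the case where this maximum is realised only by orientable surfaces. Indeed, Lemma \ref{l1}(a) then forces $\widetilde{\gamma}(G_i) = 2\gamma(G_i)+1$, so any nonorientable embedding of $G_i$ has strictly smaller Euler characteristic.

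For the upper bound I would fix, for each component $G_i$, an embedding into a surface $\Sigma_i$ with $\chi(\Sigma_i) = \mu(G_i)$, remove a small open disk from a face of each, and identify the resulting boundary circles pairwise to form the connected sum $\Sigma_1 \# \cdots \# \Sigma_n$. Using $\chi(\Sigma_1 \# \Sigma_2) = \chi(\Sigma_1) + \chi(\Sigma_2) - 2$, this yields a closed surface containing $G$ with $\chi = \sum_i \mu(G_i) - 2(n-1)$. In the ``otherwise'' case, at least one $\Sigma_i$ may be taken nonorientable, so the connected sum itself is nonorientable and one reads off $\widetilde{\gamma}(G) \leq 2n - \sum_i \mu(G_i)$. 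In the first case every $\Sigma_i$ must be orientable, so one additional connected sum with $N_1$ is required to force nonorientability (using $S_g \# N_1 = N_{2g+1}$), and substituting $\widetilde{\gamma}(G_i) = 2\gamma(G_i)+1$ rearranges the resulting crosscap count as $1 - n + \sum_i \widetilde{\gamma}(G_i)$.

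For the matching lower bound I would start with any cellular embedding of $G$ into a closed surface $\Sigma$ and, exploiting that the components $G_i$ lie in pairwise disjoint closed subsets, produce $n-1$ pairwise disjoint simple closed curves in $\Sigma \setminus G$ that separate $\Sigma$ into $n$ pieces, each containing one component. Capping boundary circles with disks yields closed surfaces $\overline{\Sigma}_i$ embedding $G_i$, whence $\chi(\overline{\Sigma}_i) \leq \mu(G_i)$, while $\chi(\Sigma) = \sum_i \chi(\overline{\Sigma}_i) - 2(n-1)$ follows from the connected-sum relation. Rearranging gives $\widetilde{\gamma}(\Sigma) \geq 2n - \sum_i \mu(G_i)$; in the first case, the additional fact that every $\overline{\Sigma}_i$ attaining $\mu(G_i)$ is necessarily orientable, combined with the nonorientability of $\Sigma$, forces at least one summand to lose a further unit of Euler characteristic, producing the sharper bound $1-n+\sum_i\widetilde{\gamma}(G_i)$.

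The main obstacle is the construction of the separating curve system in the lower-bound argument: showing that any embedding of a disconnected graph in a surface admits $n-1$ pairwise disjoint separating curves disjoint from the graph requires a careful regular-neighbourhood argument applied to disjoint compact subcomplexes. A secondary subtlety is the ``parity'' bookkeeping when mixing orientable and nonorientable summands, which must be tracked consistently throughout the construction and the lower bound in order to justify the case split in the statement.
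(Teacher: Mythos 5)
First, note that the paper does not prove Lemma \ref{l2} at all: it is quoted verbatim from Stahl's paper (\cite[Theorem 1 and Corollary 3]{Stahl}), so your attempt can only be measured against Stahl's argument. Your upper bound is fine and is the easy half: embed each $G_i$ in a surface of maximal Euler characteristic $\mu(G_i)$, tube the pieces together, and in the orientably simple case (where Lemma \ref{l1}(a) indeed forces $\widetilde{\gamma}(G_i)=2\gamma(G_i)+1$) add one extra crosscap; the arithmetic you indicate is correct.

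The genuine gap is in your lower bound. You begin with ``any cellular embedding of $G$'' --- a disconnected graph has no $2$-cell embedding in a connected closed surface, a small slip --- and then you assert that one can always find $n-1$ pairwise disjoint simple closed curves in $\Sigma\setminus G$ cutting $\Sigma$ into $n$ pieces, one per component. That assertion is false for embeddings in general: take two disjoint parallel essential circles on the torus; every separating simple closed curve on the torus bounds a disk, so no curve in the complement can separate the two components. For a genus-minimal nonorientable embedding such a connected-sum structure does exist, but that is essentially the conclusion of the theorem, so assuming it is circular; ``a careful regular-neighbourhood argument'' is named but not supplied, and it is the whole content of the proof. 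The workable version (and in substance Stahl's) avoids separating curves altogether: take disjoint regular neighbourhoods $N_i$ of the components, cap their boundary circles to obtain closed surfaces containing each $G_i$ (so of Euler characteristic at most $\mu(G_i)$), and keep track of the complementary pieces $W$ as well; writing $b$ for the total number of boundary circles and $c$ for the number of components of $W$, connectivity of $\Sigma$ gives $b\geq n+c-1$, and the Euler-characteristic count $\chi(\Sigma)=\sum_i\chi(N_i)+\chi(W)$ then yields $\chi(\Sigma)\leq\sum_i\mu(G_i)-2(n-1)$, i.e.\ $\widetilde{\gamma}(G)\geq 2n-\sum_i\mu(G_i)$. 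In the orientably simple case one must additionally analyse when equality could hold: all pieces would then be orientable and glued in a tree pattern along single circles, forcing $\Sigma$ orientable, a contradiction which produces the extra $+1$. Your ``parity bookkeeping'' remark gestures at this, but without the neighbourhood/counting argument above neither the basic inequality nor the refinement is actually established.
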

If we combine  Lemma  \ref{l1}(a) and Lemma \ref{l2}, we can conclude the following corollary:
\begin{corollary}\label{c1} Let $G$ be a graph with components $G_1,G_2, \cdots, G_n$. Then
$$1-n+\sum_{i=1}^n\widetilde{\gamma}(G_i)\leq\widetilde{\gamma}(G) \leq \sum_{i=1}^n\widetilde{\gamma}(G_i)$$
\end{corollary}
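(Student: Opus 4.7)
The plan is to derive the two inequalities directly from Lemma~\ref{l2} by a clean case split on whether every component $G_i$ satisfies $\widetilde{\gamma}(G_i)>2\gamma(G_i)$, and then to use Lemma~\ref{l1}(a) to control $\mu(G_i)$ in the remaining case. Since Lemma~\ref{l2} gives \emph{exact} formulas, the whole argument is bookkeeping: rewrite each formula as $\widetilde\gamma(G)=\sum\widetilde\gamma(G_i)-\text{correction}$, and check that the correction lies in $[0,n-1]$.

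First I would handle Case~1, where $\widetilde\gamma(G_i)>2\gamma(G_i)$ for every $i$. Lemma~\ref{l2} then gives $\widetilde\gamma(G)=1-n+\sum\widetilde\gamma(G_i)$, which is exactly the lower bound with equality. The upper bound is immediate because $1-n\le 0$ for $n\ge 1$.

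Next I would do Case~2, where $\widetilde\gamma(G_{i_0})\le 2\gamma(G_{i_0})$ for some index $i_0$, so by Lemma~\ref{l2} we have $\widetilde\gamma(G)=2n-\sum\mu(G_i)$. The upper bound is easy: since $\mu(G_i)\ge 2-\widetilde\gamma(G_i)$ by definition, summation gives $\sum\mu(G_i)\ge 2n-\sum\widetilde\gamma(G_i)$, and rearranging yields $\widetilde\gamma(G)\le\sum\widetilde\gamma(G_i)$.

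The main obstacle is the lower bound in Case~2, which reduces to showing
\[
\sum_{i=1}^{n}\bigl(\widetilde\gamma(G_i)+\mu(G_i)\bigr)\le 3n-1.
\]
Here I would split the sum according to the two branches of the max in the definition of $\mu$. For indices $i$ with $\widetilde\gamma(G_i)\le 2\gamma(G_i)$ (including the distinguished $i_0$), $\mu(G_i)=2-\widetilde\gamma(G_i)$, so the summand equals $2$. For indices $i$ with $\widetilde\gamma(G_i)>2\gamma(G_i)$, $\mu(G_i)=2-2\gamma(G_i)$, and Lemma~\ref{l1}(a) gives $\widetilde\gamma(G_i)\le 2\gamma(G_i)+1$, so the summand is at most $(2\gamma(G_i)+1)+(2-2\gamma(G_i))=3$. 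Combining, at least one summand is $2$ and the remaining at most $n-1$ are each bounded by $3$, giving the required bound $\le 2+3(n-1)=3n-1$. This completes both inequalities, and the corollary follows.
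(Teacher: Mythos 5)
Your proof is correct and follows exactly the route the paper intends: the corollary is stated there as an immediate combination of Lemma~\ref{l1}(a) and Lemma~\ref{l2}, and your case analysis (equality in the first branch of Stahl's formula, and the bound $\sum_i\bigl(\widetilde\gamma(G_i)+\mu(G_i)\bigr)\le 3n-1$ in the second, using $\widetilde\gamma(G_i)\le 2\gamma(G_i)+1$) simply supplies the bookkeeping the paper leaves implicit. No gaps.
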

\begin{lemma}\label{l3}{\rm (\cite [Corollaries 11.7 and 11.8]{White})} Let $G$ be a  connected graph with $p\geq 3$ vertices and $q$ edges. Then $\widetilde{\gamma}(G)\geq \frac{q}{3}-p+2$. In particular, if $G$ has no triangle, then $\widetilde{\gamma}(G)\geq \frac{q}{2}-p+2$.
\end{lemma}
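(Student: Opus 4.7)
The plan is to apply Euler's formula to an optimal $2$-cell embedding and then bound the face count via an edge-face incidence count. Set $k = \widetilde{\gamma}(G)$ and fix an embedding of $G$ in the nonorientable surface $N_k$ realizing this genus; by a standard result, a minimum-crosscap embedding of a connected graph is automatically a $2$-cell embedding, so Euler's formula applies and yields
$$p - q + f = \chi(N_k) = 2 - k,$$
where $f$ denotes the number of faces. Bounding $k$ from below thus reduces to bounding $f$ from above.

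For the first inequality, I would argue that every face boundary walk has length at least $3$: a walk of length $1$ would have to be a loop, and a walk of length $2$ would force $G$ to consist of a single edge, both ruled out by simplicity and the hypothesis $p \geq 3$. Summing the lengths of face boundary walks over all faces and using that each edge of $G$ contributes to at most two such boundaries gives $3f \leq 2q$, i.e.\ $f \leq 2q/3$. Substituting into Euler's formula produces
$$k \;\geq\; 2 - p + q - \frac{2q}{3} \;=\; \frac{q}{3} - p + 2,$$
which is the first assertion.

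For the second inequality, assume additionally that $G$ is triangle-free. In a simple graph, any closed walk of length exactly $3$ must use three distinct edges and therefore traces out a triangle; hence every face boundary walk now has length at least $4$, giving $4f \leq 2q$ and $f \leq q/2$. Plugging this into Euler's formula yields $k \geq q/2 - p + 2$, as desired. The only genuinely delicate point in this argument is the justification that face boundary walks in a $2$-cell embedding of a simple connected graph with $p \geq 3$ have length at least $3$ (respectively $4$ in the triangle-free case); once that incidence bound is in hand, the rest is a direct substitution into Euler's formula.
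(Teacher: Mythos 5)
The final inequalities are the right ones, but the step you call a ``standard result'' --- that a minimum-crosscap embedding of a connected graph is automatically $2$-cell --- is false, and it is exactly the pitfall this paper warns about in the remark preceding Lemma~\ref{l4} (citing \cite[p.~144]{White}): Youngs' theorem holds for the orientable genus, but its nonorientable analogue fails. Concretely, $K_7$ has $\gamma(K_7)=1$ and $\widetilde{\gamma}(K_7)=3$; embed $K_7$ cellularly in the torus and attach a crosscap inside the interior of one face. This produces an embedding of $K_7$ in $N_3$, i.e.\ in the nonorientable surface of its minimal crosscap number, in which one face is a M\"obius band rather than an open disk. More generally, whenever $\widetilde{\gamma}(G)=2\gamma(G)+1$ such non-cellular minimal embeddings exist, so you cannot simply ``fix an embedding realizing the genus'' and write $p-q+f=2-k$ for it; even the weaker claim that \emph{some} cellular embedding in $N_k$ exists is a genuinely delicate point that needs justification, and it is precisely the point on which the proof of \cite[Lemma 2.2]{Projective+total} criticized in this paper goes wrong.

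The argument can be repaired while keeping your face-counting step (which is fine: in a simple connected graph with $p\geq 3$ every face is incident with at least $3$ edge-sides, and at least $4$ in the triangle-free case). One clean fix is to replace Euler's formula by the Euler \emph{inequality} valid for arbitrary (not necessarily cellular) embeddings of a connected graph in a closed surface $S$: since each face is an open surface whose (compactly supported) Euler characteristic is at most $1$, with equality exactly for open disks, one has $p-q+f\geq \chi(S)=2-k$, which is the direction you need, and your counts $3f\leq 2q$ (resp.\ $4f\leq 2q$) then give $k\geq \frac{q}{3}-p+2$ (resp.\ $k\geq \frac{q}{2}-p+2$). Alternatively, argue by cases: if the chosen minimal embedding is cellular, proceed as you did; if not, capping the non-disk faces shows $G$ embeds in a surface of larger Euler characteristic, which by minimality of $k$ must be orientable, so $2-2\gamma(G)>2-k$, and then Youngs' theorem plus the orientable Euler formula gives the even stronger bound $q\leq 3\bigl(p-2+2\gamma(G)\bigr)\leq 3(p-2)+3(k-1)$. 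Note also that the paper itself offers no proof of this lemma --- it quotes \cite[Corollaries 11.7 and 11.8]{White} --- so if you want a self-contained argument it must be one of these corrected versions, not the cellularity shortcut.
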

Now, from Corollary \ref{c1} together Lemma \ref{l3}, we obtain the following corollary:
\begin{corollary}\label{c2} Let $G$ be a graph with $n$ components,  $p\geq 3$ vertices and $q$ edges. Then $\widetilde{\gamma}(G)\geq \frac{q}{3}-p+n+1$. In particular, if $G$ has no triangles, then $\widetilde{\gamma}(G)\geq \frac{q}{2}-p+n+1$.
\end{corollary}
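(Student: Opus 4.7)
The plan is to derive this immediately by combining the component–splitting inequality from Corollary~\ref{c1} with a per-component application of Lemma~\ref{l3}. First I would label the connected components of $G$ as $G_1,\dots,G_n$, and set $p_i=|V(G_i)|$ and $q_i=|E(G_i)|$, so that $\sum_{i=1}^n p_i=p$ and $\sum_{i=1}^n q_i=q$.

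Next I would feed each $G_i$ into Lemma~\ref{l3} to obtain $\widetilde{\gamma}(G_i)\geq \tfrac{q_i}{3}-p_i+2$, and then sum these inequalities over $i$, producing
$$\sum_{i=1}^n \widetilde{\gamma}(G_i)\;\geq\;\frac{q}{3}-p+2n.$$
Plugging this into the left-hand inequality of Corollary~\ref{c1}, namely $\widetilde{\gamma}(G)\geq 1-n+\sum_{i=1}^n\widetilde{\gamma}(G_i)$, the two estimates telescope to
$$\widetilde{\gamma}(G)\;\geq\;1-n+\frac{q}{3}-p+2n\;=\;\frac{q}{3}-p+n+1,$$
which is exactly the claim. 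The triangle-free strengthening is obtained by the identical argument, replacing the bound $\tfrac{q_i}{3}-p_i+2$ by the triangle-free bound $\tfrac{q_i}{2}-p_i+2$ from the second assertion of Lemma~\ref{l3}.

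The one subtle point, and the only place I would pause, is the hypothesis $p_i\geq 3$ required by Lemma~\ref{l3} on each component: an isolated vertex or isolated edge does not satisfy the per-component lower bound. I expect the intended reading of the statement is that $G$ has no such trivial components (so the global assumption $p\geq 3$ effectively propagates to each component in the applications that follow); otherwise one would split the components into ``large'' ($p_i\geq 3$) and ``small'' ($p_i\leq 2$) and note that the small ones contribute no edges to the triangle-free estimate and may be discarded. Once that bookkeeping is settled, the corollary is just the two-line combination above.
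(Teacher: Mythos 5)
Your argument is exactly the paper's intended derivation: the corollary is stated there without a separate proof, simply as the combination of Corollary~\ref{c1} with a per-component application of Lemma~\ref{l3}, which is precisely your two-line computation. Your caveat about components with fewer than $3$ vertices is well taken---as printed the bound can fail (e.g.\ for three isolated vertices), though your ``discard the small components'' fix works whenever at least one component has $p_i\geq 3$, and in all of the paper's applications every component does, so your reading matches the intended use.
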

The authors in \cite[Lemma 2.2]{Projective+total}, obtained the following lemma (when the graph $G$ is connected), but they used Euler's formula in their proof which is false in nonorientable case (see \cite[p. 144]{White}). Fortunately,  the result is true and we prove it in general case.
We remak here that the Euler's formula also used in \cite{Projective+zero}, which is false in nonorientable case and so the results in \cite{Projective+zero} must be checked again.
\begin{lemma}\label{l4} Let $G$ be a graph with $n$ components and $p \geq 3$ vertices.  Then $$\delta(G) \leq 6+ \frac{6 \widetilde{\gamma}(G)-6(n+1)}{p}.$$
\end{lemma}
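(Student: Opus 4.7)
The plan is to combine two ingredients: a handshake-lemma bound on the number of edges from below (via the minimum degree) and the nonorientable Euler-type bound from Corollary \ref{c2} from above. Let $q=|E(G)|$. Since every edge contributes $2$ to the degree sum, the handshake identity yields
\[
2q \;=\; \sum_{v\in V(G)} \mathrm{deg}(v) \;\geq\; p\,\delta(G).
\]
On the other hand, because $G$ has $p\geq 3$ vertices and $n$ components, Corollary \ref{c2} gives the inequality $\widetilde{\gamma}(G)\geq q/3-p+n+1$, which rearranges to
\[
q \;\leq\; 3\widetilde{\gamma}(G)+3p-3(n+1).
\]

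Substituting this upper bound for $q$ into $p\,\delta(G)\leq 2q$ and dividing by $p$ yields
\[
\delta(G) \;\leq\; \frac{2q}{p} \;\leq\; \frac{6\widetilde{\gamma}(G)+6p-6(n+1)}{p} \;=\; 6+\frac{6\widetilde{\gamma}(G)-6(n+1)}{p},
\]
which is exactly the claimed inequality.

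I do not expect any serious obstacle here: the argument is a direct combination of the handshake bound with Corollary \ref{c2}, and the hypothesis $p\geq 3$ is precisely what is needed to invoke that corollary. The only subtle point worth flagging is that we use Corollary \ref{c2} \emph{globally} (for $G$ as a single graph with $n$ components and $p$ vertices), which is why no per-component hypothesis on vertex count is required and why the bound is valid uniformly across all components, including isolated vertices or isolated edges.
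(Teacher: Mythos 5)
Your proof is correct and is essentially identical to the paper's: both combine the handshake inequality $p\,\delta(G)\leq 2q$ with the bound $2q\leq 6(p+\widetilde{\gamma}(G)-(n+1))$ from Corollary \ref{c2}. No further comment is needed.
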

\begin{proof} Since $\sum_{v \in V(G)} \mathrm{deg}(v)=2q$, then $p\delta(G)\leq 2q$. Now, by Corollary \ref{c2}, $2q \leq 6(p+\widetilde{\gamma}(G)-(n+1))$. This completes the proof.
\end{proof}
\section{$\Gamma(R, S)$ with finite nonorientable genus}
In this section, first we prove that for  an   Artinian ring $R$  with  $\widetilde{\gamma}(\Gamma(R,S))=k<\infty$, for some non-negative intger $k$, $R$ must be a finite ring. Then, we prove that for a given positive integer $k$, the number of finite rings $R$ such that $\widetilde{\gamma}(\Gamma(R,S))=k$ is finite.
We begin with some basic general properties of $\Gamma(R, G, S)$.
\begin{lemma}\label{l5}{\rm (\cite[Remark 2.4]{Generalized+unit})}
\begin{itemize}
    \item[(a)] For any vertex $x$ of $\Gamma(R,G,S)$, we have the inequalities
     $$\ \ \ \ \ \ \ \ \ \ \ \ \ \ \ \ \ \ \ \ \ \ |G|-1\leq\T{deg}(x)\leq |G||S|.$$
     Furthermore, for any vertex $x$ of $\overline{\Gamma}(R,G,S)$, $\T{deg}(x)\geq|G|$.
   \item[(b)] Suppose that $R_1$ and $R_2$ are rings and, for each $i$ with $i=1,2$, $G_i$ is a subgroup of $U(R_i)$. Also, assume that $S_i$ is a non-empty subset of $G_i$ with $S_i^{-1}\subseteq S_i$.
            \begin{itemize}
              \item[(i)] Then $\Gamma(R_1\times R_2,G_1\times G_2,S_1\times S_2)\cong \overline{\Gamma}(R_1,G_1,S_1)\otimes \overline{\Gamma}(R_2,G_2,S_2).$
              \item[(ii)] Furthermore, whenever $R_1=R_2$, $G_1\subseteq G_2$ and $S_1\subseteq S_2$, then $\Gamma(R_1,G_1,S_1)$ is a subgraph of $\Gamma(R_2,G_2,S_2)$.
             \end{itemize}
   \end{itemize}
\end{lemma}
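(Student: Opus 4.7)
The plan is to verify each item directly from the definitions of $\Gamma(R,G,S)$, $\overline{\Gamma}(R,G,S)$, and the tensor product, since the lemma is a bookkeeping statement rather than a substantive theorem.

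For part (a), I would fix a vertex $x$ and parameterize the candidate neighbors $y$: every neighbor arises as $y = s^{-1}(g-x)$ for some $(s,g)\in S\times G$, so there are at most $|S||G|$ candidates and hence $\T{deg}(x)\le |S||G|$. For the lower bound, I would fix a single $s_0 \in S$ and note that the map $g \mapsto s_0^{-1}(g-x)$ is a bijection from $G$ into $R$, producing $|G|$ distinct candidate neighbors; at most one of them equals $x$ itself, so at least $|G|-1$ are genuine neighbors in $\Gamma(R,G,S)$. For $\overline{\Gamma}(R,G,S)$, I would observe that if the excluded candidate $y=x$ does occur then the resulting loop at $x$ contributes $2$ to the degree, while if it does not occur then all $|G|$ candidates are distinct from $x$; either way the degree is at least $|G|$.

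For part (b)(i), I would use the identity map on $R_1\times R_2$ as the candidate isomorphism. Two distinct vertices $(x_1,x_2)$, $(y_1,y_2)$ are adjacent in $\Gamma(R_1\times R_2, G_1\times G_2, S_1\times S_2)$ iff there exists $(s_1,s_2)\in S_1\times S_2$ with $(x_1+s_1y_1,\, x_2+s_2y_2)\in G_1\times G_2$; since membership in a product set splits into two independent componentwise conditions, this reduces to the existence of $s_i\in S_i$ with $x_i+s_iy_i\in G_i$ for each $i\in\{1,2\}$, i.e., $x_iy_i\in E(\overline{\Gamma}(R_i,G_i,S_i))$ for both $i$ (allowing loops when $x_i=y_i$), which is exactly the tensor-product adjacency. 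For (b)(ii), the vertex sets coincide since $R_1=R_2$, and every adjacency in $\Gamma(R_1,G_1,S_1)$ certified by some $s\in S_1$ with $x+sy\in G_1$ is also an adjacency in $\Gamma(R_2,G_2,S_2)$ because $s\in S_1\subseteq S_2$ and $x+sy\in G_1\subseteq G_2$.

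The main conceptual point, which I anticipate as the only delicate step, lies in (b)(i): one must use the factor graphs $\overline{\Gamma}(R_i,G_i,S_i)$ rather than $\Gamma(R_i,G_i,S_i)$, since a neighbor of $(x_1,x_2)$ in the product graph is only required to differ from it in some coordinate, and coincidence in the remaining coordinate must be recorded by a loop in the corresponding factor graph. Once one is careful to include loops in the factors, the equivalence becomes a routine manipulation of quantifiers over product sets.
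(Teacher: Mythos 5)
Your proof is correct. Note that the paper itself gives no proof of this lemma---it is quoted as \cite[Remark 2.4]{Generalized+unit}---and your direct verification is exactly the routine argument the cited remark rests on, with the two genuinely delicate points handled properly: the loop-counts-twice convention that upgrades $|G|-1$ to $|G|$ for $\overline{\Gamma}(R,G,S)$ in part (a), and the need to take $\overline{\Gamma}(R_i,G_i,S_i)$ rather than $\Gamma(R_i,G_i,S_i)$ as the tensor factors in (b)(i) because a neighbor in the product may agree with $(x_1,x_2)$ in one coordinate.
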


\begin{lemma}\label{t0.1}{\rm (\cite[Theorem 2.7]{Generalized+unit})} The graph $\Gamma(R,G,S)$ is a complete graph if and only if the following statements hold.
\begin{itemize}
  \item[(a)] $R$ is a field.
  \item[(b)] $G=U(R)$.
  \item[(c)] $|S|\geq 2$ or $S=\{-1\}$.
  \end{itemize}
\end{lemma}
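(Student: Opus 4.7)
The plan is to prove the two implications separately. For the forward direction, assume $\Gamma(R,G,S)$ is a complete graph; I would first extract structural information from the vertex $0$. Completeness forces $0$ to be adjacent to every nonzero $x \in R$, so there exists $s \in S$ with $0 + sx = sx \in G$. Since $s \in S \subseteq G$ and $G$ is a multiplicative subgroup of $U(R)$, I deduce $x = s^{-1}(sx) \in G$. Thus $R \setminus \{0\} \subseteq G \subseteq U(R)$, which immediately forces $R$ to be a field (every nonzero element is a unit) and $G = U(R) = R \setminus \{0\}$, giving parts (a) and (b).

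For part (c) I would argue by contrapositive: suppose $|S| = 1$ and $S \neq \{-1\}$, and derive a contradiction. The hypothesis $S^{-1} \subseteq S$ forces $S = \{s_0\}$ with $s_0 = s_0^{-1}$, hence $s_0^2 = 1$, so $s_0 \in \{1, -1\}$ inside the field $R$. Since $S \neq \{-1\}$ I must have $s_0 = 1$, and in particular $1 \neq -1$, so $R$ does not have characteristic $2$. Then picking any nonzero $x \in R$, the vertices $x$ and $-x$ are distinct, yet $x + s_0 \cdot (-x) = 0 \notin G$, contradicting the assumption that they are adjacent in the complete graph.

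For the converse, assume (a), (b), and (c); I would verify completeness of $\Gamma(R,U(R),S)$ by taking arbitrary distinct $x, y \in R$ and producing the required $s$. If $S = \{-1\}$, then $x + (-1)y = x - y$ is a nonzero element of the field $R$, hence lies in $U(R)$. If instead $|S| \geq 2$: when $y = 0$, any $s \in S$ gives $x + sy = x \in U(R)$ since $x \neq 0$; when $y \neq 0$, the equation $x + sy = 0$ has the unique solution $s = -xy^{-1}$ in the field, so since $|S| \geq 2$ at least one element of $S$ avoids this value and yields $x + sy \in U(R)$.

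There is no serious obstacle, and the argument uses nothing beyond the multiplicative-group structure of $G$ together with the field structure imposed by (a). The most important step conceptually is the very first one, which promotes the single vertex $0$ into a \emph{universal test vertex}: its forced adjacencies collapse $G$ onto all of $R \setminus \{0\}$ simultaneously, after which (c) and the converse both reduce to elementary manipulations inside the set $S$.
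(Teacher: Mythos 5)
Your proof is correct: the ``universal test vertex'' $0$ yields $R\setminus\{0\}\subseteq G\subseteq U(R)$, the contrapositive for (c) correctly handles the characteristic-$2$ degeneracy, and the converse case analysis ($S=\{-1\}$ versus $|S|\geq 2$, with the single forbidden value $s=-xy^{-1}$) is complete. Note that the paper itself does not prove this statement---it is quoted from Khashyarmanesh et al.\ \cite[Theorem 2.7]{Generalized+unit}---so there is no in-paper argument to compare against; your self-contained proof is the natural one and needs no changes.
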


\begin{remark}\label{r2} \rm (\cite[Remark 3.1]{Generalized+unit}) Suppose that $\{x_i+J(R)\}_{i\in I}$ is a complete set of coset representation of $J(R)$. Note that if $x \in U(R) $ and $j \in J(R)$, then $x+j\in U(R)$. Hence, whenever $x_i$ and $x_j$ are adjacent vertices in $\Gamma(R,S)$, then every element of $x_i+J(R)$ is adjacent to every element of $x_j+J(R)$.
\end{remark}

\begin{lemma}{\rm (\cite[Proposition 3.2 and its proof]{Generalized+unit})}\label{l5.2} Let $\fm$ be a maximal ideal of $R$ such that $|\frac{R}{\fm}|=2$. Then the graph $\Gamma(R,S)$ is bipartite. Furthermore, if $R$ is a local ring, then $\Gamma(R,S)$ is a complete bipartite graph  with parts $\fm$ and $1+\fm$.
\end{lemma}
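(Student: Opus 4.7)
The plan is to use the decomposition $R=\fm\sqcup(1+\fm)$, which is available because $|R/\fm|=2$, and show that this decomposition is a bipartition of $\Gamma(R,S)$. The key observation is that any unit $s\in U(R)$ reduces modulo $\fm$ to a unit of $R/\fm=\F_2$, and the only unit of $\F_2$ is $1$, so $\bar{s}=1$ for every $s\in S$.

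First I would verify that no edge lies inside $\fm$ or inside $1+\fm$. If $x\equiv y\pmod{\fm}$, then for every $s\in S$ the reduction satisfies $\overline{x+sy}=\bar{x}+\bar{s}\bar{y}=\bar{x}+\bar{y}=0$ in $R/\fm$, so $x+sy\in\fm$, which is disjoint from $U(R)$. Hence $x$ and $y$ cannot be adjacent in $\Gamma(R,S)$, and the graph is bipartite with parts $\fm$ and $1+\fm$.

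For the furthermore part, where $R$ is assumed local so that $U(R)=R\setminus\fm$, I would take arbitrary $x\in\fm$ and $y\in 1+\fm$, pick any $s\in S$ (possible since $S$ is nonempty), and compute $\overline{x+sy}=0+1\cdot 1=1\neq 0$ in $\F_2$; thus $x+sy\notin\fm$, and by locality $x+sy\in U(R)$, showing that $x$ and $y$ are adjacent. This makes $\Gamma(R,S)$ complete bipartite with the claimed parts.

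No substantive obstacle is anticipated; the whole argument is a short reduction modulo $\fm$. The only place where the local hypothesis is really needed is the converse direction ``$x+sy\notin\fm\Rightarrow x+sy\in U(R)$''. Without locality one still obtains bipartiteness from the first half but not necessarily completeness, which is consistent with the weaker conclusion in the non-local case.
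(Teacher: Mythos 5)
Your argument is correct: reducing modulo $\fm$, noting every $s\in S$ maps to $1$ in $R/\fm\cong\F_2$, gives both that no edge joins two vertices in the same coset and, under locality (where $U(R)=R\setminus\fm$), that every $x\in\fm$ is adjacent to every $y\in 1+\fm$. The paper only cites this lemma from Khashyarmanesh et al.\ (Proposition 3.2 and its proof), and that source's argument is essentially this same coset-reduction, so your route matches the intended proof.
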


\begin{theorem}\label{t1} Let $R$ be an Artinian ring such that $\widetilde{\gamma}(\Gamma(R,S))=k<\infty$, for some non-negative intger $k$. Then $R$ is a finite ring.
\end{theorem}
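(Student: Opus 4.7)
The plan is to argue by contradiction: assume $R$ is infinite Artinian and show $\widetilde{\gamma}(\Gamma(R,S)) = \infty$. The first observation is that $0$ and $1$ are always adjacent in $\Gamma(R,S)$, since for any $s\in S$ we have $0+s\cdot 1 = s\in U(R)$; this provides at least one edge to feed into Remark \ref{r2}. I would then split according to whether $J(R)$ is infinite.

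\emph{Case 1: $|J(R)|=\infty$.} The vertices $0$ and $1$ are adjacent representatives of the distinct cosets $J(R)$ and $1+J(R)$ (distinct because $1\in U(R)$ is disjoint from $J(R)$). By Remark \ref{r2}, every element of $J(R)$ is adjacent to every element of $1+J(R)$, so $\Gamma(R,S)$ contains $K_{|J(R)|,|J(R)|}$. As $|J(R)|=\infty$, this contains $K_{m,m}$ for every $m$, and Lemma \ref{l1}(d) gives $\widetilde{\gamma}(\Gamma(R,S))\geq \widetilde{\gamma}(K_{m,m})=\lceil (m-2)^2/2\rceil \to \infty$.

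\emph{Case 2: $|J(R)|<\infty$ while $|R|=\infty$.} Using the Artinian decomposition $R\cong R_1\times\cdots\times R_n$ into local factors $(R_i,\fm_i)$ with residue fields $K_i=R_i/\fm_i$, finiteness of $J(R)\cong\fm_1\times\cdots\times\fm_n$ forces each $|\fm_i|<\infty$. Since $\fm_i/\fm_i^2$ is a nonzero $K_i$-vector space whenever $\fm_i\neq 0$, this forces $|K_i|<\infty$, and hence $|R_i|<\infty$, whenever $\fm_i\neq 0$. Because $R$ is infinite, some $R_i$ must therefore satisfy $\fm_i=0$ and $|K_i|=\infty$, i.e., $R_i$ is an infinite field; relabel so that $R_1=K_1$. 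If $n=1$, then $R=K_1$, and Lemma \ref{t0.1} (together with a direct check of the sole exceptional case $S=\{1\}$, in which $\Gamma(K_1,\{1\})$ is the complete graph on $K_1$ with the matching $\{\{a,-a\}:a\in K_1^*\}$ removed) shows $\Gamma(K_1,S)\supseteq K_m$ for every $m$, so Lemma \ref{l1}(c) forces $\widetilde{\gamma}=\infty$. If $n\geq 2$, write $R=K_1\times R'$ with $R'$ nontrivial and take the disjoint vertex sets $A=\{(a,1_{R'}):a\in K_1\}$ and $B=\{(b,0_{R'}):b\in K_1\}$; the identity $(a,1_{R'})+(s_1,s_2)(b,0_{R'})=(a+s_1b,1_{R'})$ shows this lies in $U(R)=K_1^*\times U(R')$ iff $a+s_1b\in K_1^*$, so adjacency depends only on $\pi_1(S)\subseteq K_1^*$. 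A short case check (according to whether $|\pi_1(S)|\geq 2$ or $|\pi_1(S)|=1$) then shows the induced bipartite subgraph on $A\cup B$ is $K_{|K_1|,|K_1|}$ minus at most a perfect matching, yielding $K_{m,m}\subseteq\Gamma(R,S)$ for all $m$.

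In every case $\widetilde{\gamma}(\Gamma(R,S))=\infty$, contradicting the hypothesis and forcing $R$ to be finite. The hardest step is Case 2: extracting the infinite field factor is routine from the Artinian structure, but transferring $K_{m,m}$-growth from a single factor back to $\Gamma(R,S)$ requires the bipartite slice $A\cup B$ above, because $S$ need not have product form and so the tensor decomposition of Lemma \ref{l5}(b) cannot be invoked directly.
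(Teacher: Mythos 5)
Your proof is correct, and while it shares the paper's skeleton---splitting on $J(R)$, using the adjacency of $0$ and $1$ together with Remark \ref{r2} to produce large $K_{m,m}$'s, and invoking the Artinian decomposition into local factors---it diverges at two substantive points. First, where the paper concludes finiteness of each local factor $R_i$ by quoting Ganesan's theorem (via $Z(R_i)=J(R_i)$), you argue elementarily: finiteness of $\fm_i$ plus Nakayama gives a nonzero \emph{finite} $K_i$-vector space $\fm_i/\fm_i^2$ whenever $\fm_i\neq 0$, hence $K_i$ and then $R_i$ are finite. This is not merely cosmetic: Ganesan's theorem requires a nonzero zero-divisor, so the paper's second case ($|J(R)|>1$) does not literally cover a local factor that is an infinite field (e.g.\ $R\cong\mathbb{R}\times\Z_4$ has $|J(R)|>1$, and Ganesan says nothing about the factor $\mathbb{R}$); your organization---$J(R)$ infinite versus $J(R)$ finite with an extracted infinite field factor---handles that mixed configuration uniformly, your bipartite slice $A\cup B$ playing exactly the role of the paper's vertices $(x_i,1,\dots,1)$, $(y_j,0,\dots,0)$ from its $J(R)=0$ case, and your observation that adjacency there only depends on $\pi_1(S)$ correctly sidesteps the fact that $S$ need not be a product. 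Second, the paper argues quantitatively (choosing $k'=\max\{3,4k\}$ vertices and contradicting $\widetilde{\gamma}(\Gamma(R,S))=k$ via $k'\le\sqrt{2k}+2$), whereas you show $\widetilde{\gamma}(\Gamma(R,S))=\infty$ outright by exhibiting $K_{m,m}$ (or $K_m$) for every $m$; since the nonorientable genus of an infinite graph is the supremum over its finite subgraphs, both are legitimate. Two small points you should spell out in a final write-up: cite Nakayama for $\fm_i=\fm_i^2\Rightarrow\fm_i=0$ (so that $\fm_i/\fm_i^2\neq 0$), and note $1_{R'}\neq 0_{R'}$ so that $A$ and $B$ are disjoint; both are routine.
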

\begin{proof}  First suppose that $|J(R)|=1$. In this case, for some positive integr $n$, $R\cong  F_1 \times \dots \times F_n$, where $F_i$'s ($1\leq i \leq n$ ) are fields. Suppose on the contrary $R$ is infinite. Hence, without loss of generality we can assume that $F_1$ is infinite. Let $s=(s_1, \dots, s_n) \in S$ and $k^\prime =\mathrm{max}\{3, 4k\} $. Since $F_1$ is infinite we can choose distinct elements $x_1, \dots , x_{k^\prime} , y_1, \dots, y_{k^\prime} \in F_1$ such that $-s_1y_1, \dots, -s_1y_{k^\prime} \not \in \{x_1,\dots, x_{k^\prime}\}$. Now, every element of the form $(x_i, 1, \dots, 1)$, $i=1, \dots, k^\prime$,  is adjacent to every element of the form $(y_j, 0, \dots, 0)$, $j=1, \dots, k^\prime$, in $\Gamma(R, S)$. Thus,  $K_{k^\prime, k^\prime}$ is a subgraph of $\Gamma(R, S)$ and so by parts (b) and (d) of Lemma \ref{l1}, $k^\prime \leq \sqrt{2k}+2$ which is a contradiction.

Now, suppose that $|J(R)|>1$. Since $0$ is adjacent to $1$ in $\Gamma(R, S)$, by Remark \ref{r2}, every element of $0+J(R)$ is adjacent to every element of $1+J(R)$. Hence, $K_{|J(R)|,|J(R)|}$ is a subgraph of  $\Gamma(R, S)$ and so by  parts (b) and (d) of Lemma \ref{l1}, $|J(R)|\leq \sqrt{2k}+2$. Now, since $R$ is an Artinian ring,  for some positive integr $n$, we can write $R\cong  R_1 \times \dots \times R_n$, where $R_i$'s ($1\leq i \leq n$ ) are local rings.
Thus, $|J(R)|=|J(R_1)|\times \dots \times |J(R_n)|$ and so for all  $i=1, \dots, n$, $|J(R_i)|< \infty$.  On the other hand, for all $i=1, \dots, n$,
  $Z(R_i)=J(R_i)$ and so by \cite[Theorem 1]{Ganesan}, $R_i$ is a finite ring. Hence, $R$ is a finite ring.
\end{proof}
 The following corollary is an immediate consequence from Lemma \ref{l1}(a) and Theorem \ref{t1}.
\begin{corollary}\label{c2.5}Let $R$ be an Artinian ring such that $\gamma(\Gamma(R,S))<\infty$. Then $R$ is a finite ring.
\end{corollary}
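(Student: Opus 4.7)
The plan is to deduce this immediately from the two results the paper has just established. By Lemma \ref{l1}(a), for any graph $G$ we have $\widetilde{\gamma}(G)\leq 2\gamma(G)+1$. So my first step would be to apply this inequality to $G=\Gamma(R,S)$: if $\gamma(\Gamma(R,S))<\infty$, then $\widetilde{\gamma}(\Gamma(R,S))\leq 2\gamma(\Gamma(R,S))+1<\infty$ as well.

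Once finite nonorientable genus is in hand, the conclusion follows directly from Theorem \ref{t1}, which says that an Artinian ring $R$ whose $\Gamma(R,S)$ has finite nonorientable genus must be finite. So the entire proof is just the chain ``$\gamma<\infty\Rightarrow\widetilde{\gamma}<\infty\Rightarrow R$ finite.''

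There is no real obstacle here; the work has all been done in Theorem \ref{t1}. The only thing worth being careful about is verifying that Lemma \ref{l1}(a) is applied to the whole (possibly infinite) graph $\Gamma(R,S)$, where by the convention stated in the introduction, both $\gamma$ and $\widetilde{\gamma}$ are defined as suprema over finite subgraphs; this ensures the inequality $\widetilde{\gamma}\leq 2\gamma+1$ passes to the infinite case (each finite subgraph satisfies it, so the supremum on the left is bounded by $2\gamma(\Gamma(R,S))+1$). This is standard and requires no further comment beyond a reference to Lemma \ref{l1}(a) and Theorem \ref{t1}.
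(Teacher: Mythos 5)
Your argument is exactly the paper's: the corollary is stated there as an immediate consequence of Lemma \ref{l1}(a) (giving $\widetilde{\gamma}(\Gamma(R,S))\leq 2\gamma(\Gamma(R,S))+1<\infty$) together with Theorem \ref{t1}. Your extra remark about the supremum convention for infinite graphs is a fine precaution but does not change the substance.
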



\begin{remark}\label{r1} \rm  Let $\Gamma(R, S)$ be a bipartite graph such that $\Gamma(R, S)=\overline{\Gamma}(R, S)$. Then since $\Gamma(\Z_2,\{1\})=\overline{\Gamma}(\Z_2,\{1\}) \cong K_2$, by Lemma \ref{l5}(b)(i) and \cite[Lemma 8.1]{Unitary}, $\Gamma(\Z_2^\ell  \times R, \{1\} \times \cdots \times \{1\} \times S) \cong 2^\ell \Gamma(R, S)$ for all $\ell \geq 0$.  In particular,  for any graph $\Gamma(T, S^\prime)$, we can conclude that $\Gamma(\Z_2^\ell  \times T, \{1\} \times \cdots \times \{1\} \times S^\prime) \cong 2^{\ell-1} \Gamma(\Z_2 \times T, \{1\}\times S^\prime)$, for all $\ell \geq 1$.
\end{remark}

\begin{theorem}\label{t2} Let $R$ be a finite ring and $\widetilde{\gamma}(\Gamma(R,S))=k>0$. Then either
$$|R| \leq 6k-12 \ \ \ \ \  or \ \ \ \ \ R \cong (\Z_2)^\ell \times T, $$
where $0 \leq \ell \leq \mathrm{log} _2k+1$ and $T$ is a ring with $|T| \leq 16$.
\end{theorem}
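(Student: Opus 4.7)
The plan is to combine two ideas: a direct minimum-degree estimate that handles the case $|U(R)| \geq 8$ and yields $|R| \leq 6k - 12$, and for $|U(R)| \leq 7$, the doubling structure from Remark \ref{r1} together with Lemma \ref{l2} used to peel off the $\Z_2$-factors and bound their multiplicity.

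First I would write $R \cong R_1 \times \cdots \times R_n$ as a product of finite local rings. By Lemma \ref{l5}(a), $\delta(\Gamma(R,S)) \geq |U(R)| - 1$, and Lemma \ref{l4} (with $N \geq 1$ the number of components of $\Gamma(R,S)$) gives
$$(|U(R)| - 7)\,|R| \leq (\delta(\Gamma(R,S)) - 6)\,|R| \leq 6k - 6(N+1) \leq 6k - 12.$$
So when $|U(R)| \geq 8$, the first alternative $|R| \leq 6k - 12$ follows immediately. I would then assume $|U(R)| \leq 7$ for the remainder.

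Let $\ell$ be the number of indices with $R_i \cong \Z_2$; after reordering so these come first, set $T := R_{\ell+1} \times \cdots \times R_n$ with $m := n - \ell$ local factors, so $R \cong \Z_2^\ell \times T$ and $|U(T)| = |U(R)| \leq 7$. Every local factor $T_j$ of $T$ has $|U(T_j)| \geq 2$: either its residue field has order $\geq 3$, or the residue field is $\F_2$ but $T_j \not\cong \Z_2$ forces $|T_j| \geq 4$ and $|U(T_j)| = |T_j|/2 \geq 2$. So $2^m \leq |U(T)| \leq 7$, giving $m \leq 2$. A short enumeration of non-$\Z_2$ local rings with $|U| \leq 7$ shows: if $m = 1$ then $|T| \leq 9$; if $m = 2$ then each $T_j$ has $|U(T_j)| \in \{2,3\}$ and $|T_j| \leq 4$ (the extremes being $\Z_4$, $\F_2[x]/(x^2)$, $\F_3$, $\F_4$), so $|T| \leq 16$.

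Finally I would bound $\ell$. Assuming $\ell \geq 1$, the ideal $\{0\} \times \Z_2^{\ell-1} \times T$ is maximal with residue field $\F_2$, so $\Gamma(R,S)$ is bipartite by Lemma \ref{l5.2}. With $S = \{1\}^\ell \times S'$, each $1+s$ has a zero $\Z_2$-coordinate and so is not a unit; hence $\Gamma(R,S) = \overline{\Gamma}(R,S)$. The ``in particular'' clause of Remark \ref{r1} then yields
$$\Gamma(R,S) \cong 2^{\ell-1}\, H, \qquad H := \Gamma(\Z_2 \times T,\, \{1\} \times S').$$
Since $k > 0$, $H$ is non-planar and contains at least one non-planar component. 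Applying Lemma \ref{l2} to $2^{\ell-1}H$: in its first case every component satisfies $\widetilde{\gamma} > 2\gamma$, forcing $\widetilde{\gamma} \geq 3$ on each (necessarily non-planar) component, whence $k \geq 1 + 2 \cdot 2^{\ell-1} c_H \geq 1 + 2^\ell$; in its second case, planar components contribute $\mu = 2$ and non-planar ones $\mu \leq 1$, giving
$$k = 2^{\ell-1}\!\left(2c_H - \sum_{j=1}^{c_H} \mu(H_j)\right) \geq 2^{\ell-1}\, c_{\mathrm{np}}(H) \geq 2^{\ell-1}.$$
Either way $2^{\ell-1} \leq k$, i.e., $\ell \leq \log_2 k + 1$. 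The hardest part will be this last step: verifying that both branches of Lemma \ref{l2} uniformly yield $2^{\ell-1} \leq k$.
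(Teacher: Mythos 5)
Your proof is correct and follows essentially the same route as the paper: the Lemma \ref{l4}/Lemma \ref{l5}(a) minimum-degree estimate forcing $|U(R)|\leq 7$ unless $|R|\leq 6k-12$, the decomposition $R\cong(\Z_2)^\ell\times T$, and Remark \ref{r1} combined with Stahl's result (Lemma \ref{l2}) to bound $\ell$. The only deviations are local and both check out: where the paper gets $|T|\leq 16$ by citing Theorem 3.8 of \cite{Genus+unit}, you give a self-contained enumeration (at most two non-$\Z_2$ local factors, since each contributes at least two units, of order at most $9$, resp.\ $4$ each); and where the paper splits on $t=\widetilde{\gamma}(\Gamma(\Z_2\times T,\{1\}\times S'))=1$ versus $t>1$ (via Lemma \ref{l2} and Corollary \ref{c1}, respectively), you run both branches of Lemma \ref{l2} on $2^{\ell-1}H$ componentwise, and your analysis ($\widetilde{\gamma}\geq 3$ on every component in the first branch; $\mu=2$ for planar and $\mu\leq 1$ for nonplanar components in the second) correctly yields $2^{\ell-1}\leq k$ in either case, which is all that is needed for $\ell\leq\log_2 k+1$.
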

\begin{proof} By Lemma \ref{l4}, $\delta(\Gamma(R,S))\leq 6+ \frac{6k-12}{|R|}$. If $|R| > 6k-12$, then $\delta(\Gamma(R,S)) \leq 6$ and so by Lemma \ref{l5}(a), $|U(R)| \leq 7$.  Now, since $R$ is a finite ring, we can write $R \cong (\Z_2)^\ell \times T$, where $\ell \geq 0$ and $T$ is a finite ring. Since $|U(R)| \leq 7$, in view of \cite[Theorem 3.8]{Genus+unit} and its proof, $|T| \leq 16$. It will suffice to prove that if $\ell>0$, then $\ell \leq \mathrm{log} _2k+1$.
Since $S=\{1\} \times \cdots \times \{1\} \times S^\prime$, for some $S^\prime \subseteq T$ and $\ell \geq1$, by Remark \ref{r1}, $\Gamma(\Z_2^\ell  \times T, \{1\} \times \cdots \times \{1\} \times S^\prime) \cong 2^{\ell-1} \Gamma(\Z_2 \times T, \{1\}\times S^\prime)$.
Set $t:= \widetilde{\gamma}(\Gamma(\Z_2 \times T, \{1\}\times S^\prime)$.  If $t=1$, then by Lemma \ref{l2}, $k=2^{\ell-1}$ and so $\ell= \mathrm{log} _2k+1$. Now, suppose that $t>1$. By Corollary \ref{c1}, $k \geq  1-2^{\ell-1}+2^{\ell-1}t$. Hence, $k \geq2^{\ell-1}+1$ and so $\ell \leq \mathrm{log} _2(k-1)+1$. This completes the proof.
\end{proof}

\begin{corollary}\label{c3}
Let $R$ be a finite ring such that $\widetilde{\gamma}(\Gamma(R,S))=k>0$. Then $|R|\leq 32k$. In particular, for any positive integer $k$, the number of finite rings $R$ such that $\widetilde{\gamma}(\Gamma(R,S))=k$ is finite.
\end{corollary}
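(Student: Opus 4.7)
The plan is to derive the bound $|R|\leq 32k$ as a direct corollary of Theorem \ref{t2}, and then reduce the ``finiteness'' assertion to the standard fact that there are only finitely many rings of each fixed finite cardinality up to isomorphism. Since Theorem \ref{t2} has already done all the structural work, the corollary becomes essentially a one-line arithmetic check.

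First I would split into the two alternatives provided by Theorem \ref{t2}. In the first alternative, $|R|\leq 6k-12$, and since $6k-12\leq 32k$ for every $k\geq 1$ (indeed, for $k=1$ the quantity $6k-12$ is negative and the alternative is vacuous), the claimed bound is immediate. In the second alternative, $R\cong (\Z_2)^{\ell}\times T$ with $0\leq \ell \leq \log_2 k+1$ and $|T|\leq 16$, and I would simply compute
\[
|R|\;=\;2^{\ell}\,|T|\;\leq\;2^{\log_2 k+1}\cdot 16\;=\;2k\cdot 16\;=\;32k.
\]
Combining the two cases gives $|R|\leq 32k$ unconditionally.

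For the ``in particular'' clause, I would use the standard fact that, for each positive integer $n$, only finitely many rings of cardinality $n$ exist up to isomorphism (their additive group is one of finitely many abelian groups of order $n$, and the multiplication is one of at most finitely many bilinear maps on that group). Since the bound $|R|\leq 32k$ is uniform in $R$, the collection of isomorphism classes of finite rings $R$ with $\widetilde{\gamma}(\Gamma(R,S))=k$ is contained in the finite union over $n\leq 32k$ of the (finite) set of isomorphism classes of rings of order $n$, and is therefore finite.

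There is essentially no obstacle here; the only point requiring a moment of care is the small-$k$ edge case, where $6k-12$ may be negative so the first branch of Theorem \ref{t2} is empty, but the second branch still produces the bound $|R|\leq 32k$.
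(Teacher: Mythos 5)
Your proposal is correct and follows essentially the same route as the paper: both derive $|R|\leq 32k$ directly from the two alternatives of Theorem \ref{t2} via the computation $2^{\ell}|T|\leq 2^{\log_2 k+1}\cdot 16=32k$, the only cosmetic difference being that you spell out the (standard) finiteness of isomorphism classes of rings of bounded order, which the paper leaves implicit.
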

\begin{proof}  If $|R| > 6k-12$, then by Theorem \ref{t2}, $R \cong (\Z_2)^\ell \times T, $
where $0 \leq \ell \leq \mathrm{log} _2k+1$ and $T$ is a ring with $|T| \leq 16$. In this case, $|R|=2^\ell \times |T| \leq 2^\ell \times 16 \leq 32k$. Thus, $|R| \leq \mathrm{max}\{6k-12, 32k\}=32k$.
\end{proof}
The following Corollary is an immediate consequence from Corollary \ref{c3} and Lemma \ref{l1}(a).
\begin{corollary}\label{c3.5}
For a given positive integer $g$,  the number of finite rings $R$ such that $\gamma(\Gamma(R,S))=g$ is finite.
\end{corollary}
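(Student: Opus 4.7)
The plan is to reduce Corollary \ref{c3.5} directly to the already-established Corollary \ref{c3} by using the general inequality between the two notions of genus. Fix a positive integer $g$ and let $R$ be any finite ring with $\gamma(\Gamma(R,S)) = g$. By Lemma \ref{l1}(a), we immediately get the bound
$$\widetilde{\gamma}(\Gamma(R,S)) \leq 2\gamma(\Gamma(R,S)) + 1 = 2g+1.$$
So the nonorientable genus of $\Gamma(R,S)$ lies in a bounded set depending only on $g$.

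Next I would argue that the nonorientable genus is in fact at least $1$ for these rings. Indeed, since $g \geq 1$, the graph $\Gamma(R,S)$ is not planar, and any graph with $\widetilde{\gamma} = 0$ embeds in the sphere and is therefore planar with $\gamma = 0$. Hence $\widetilde{\gamma}(\Gamma(R,S)) = k$ for some integer $k$ with $1 \leq k \leq 2g+1$.

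Finally, I would partition the rings in question according to the value of $k$ and apply Corollary \ref{c3} to each class. For every fixed $k$ in the finite range $\{1,2,\dots,2g+1\}$, Corollary \ref{c3} says there are only finitely many finite rings $R$ with $\widetilde{\gamma}(\Gamma(R,S)) = k$. The collection of finite rings $R$ with $\gamma(\Gamma(R,S)) = g$ is then contained in the finite union
$$\bigcup_{k=1}^{2g+1} \bigl\{R \text{ finite ring} : \widetilde{\gamma}(\Gamma(R,S)) = k\bigr\},$$
a finite union of finite sets, which is finite.

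There is essentially no obstacle here; the only point requiring a moment of care is ruling out $k=0$, since Corollary \ref{c3} is stated for $k > 0$. This is precisely why the hypothesis $g \geq 1$ (i.e., $g$ a positive integer) is used: it forces non-planarity and hence $\widetilde{\gamma} \geq 1$, so the union above need not be extended to include $k=0$.
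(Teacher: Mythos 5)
Your proposal is correct and is essentially the paper's own argument: the paper derives Corollary \ref{c3.5} exactly as an immediate consequence of Corollary \ref{c3} together with the bound $\widetilde{\gamma}(\Gamma(R,S))\leq 2\gamma(\Gamma(R,S))+1$ from Lemma \ref{l1}(a). Your extra care in excluding $k=0$ via non-planarity is a fine (and correct) elaboration of the same route, not a different one.
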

\section{$\Gamma(R, S)$ with nonorientable genus one}
A graph $G$ is {\it irreducible} for a surface $S$ if $G$ does not embed in $S$, but any proper subgraph of $G$ does embed in $S$. Kuratowski's theorem states that any graph which is irreducible for the sphere is homeomorphic to either $K_5$ or $K_{3,3}$. Glover, Huneke, and Wang in \cite{103+graphs} have constructed a list of 103 graphs which are irreducible for projective plane. Afterward, Archdeacon  \cite{Arch} showed that their list is complete. Hence a graph  embeds in the projective plane if and only if it contains no subgraph homeomorphic to one of the graphs in the list of 103 graphs in \cite{103+graphs}.

In this section we characterize  all finite rings $R$ whose $\Gamma(R, S)$ is projective. First, we focus in the case that $R$ is local.
\begin{lemma}\label{l6} Let $R$ be a finite ring such that $\widetilde{\gamma}(\Gamma(R,S))=1$.  Then $|U(R)| \leq 6$ and $|J(R)| \leq 3$.
\end{lemma}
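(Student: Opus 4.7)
The plan is to establish the two bounds $|U(R)|\leq 6$ and $|J(R)|\leq 3$ separately, each via a direct application of the preliminary tools.

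For $|J(R)|\leq 3$, I would exhibit a large complete bipartite subgraph of $\Gamma(R,S)$. Since for any $s\in S$ we have $0+s\cdot 1=s\in U(R)$, the vertices $0$ and $1$ are adjacent in $\Gamma(R,S)$, and they lie in the distinct cosets $0+J(R)=J(R)$ and $1+J(R)$ (distinct because $1\in U(R)$ is not in $J(R)$). Remark \ref{r2} then guarantees that every element of $J(R)$ is adjacent to every element of $1+J(R)$, so $K_{|J(R)|,|J(R)|}$ is a subgraph of $\Gamma(R,S)$. If $|J(R)|\geq 4$, then Lemma \ref{l1}(b),(d) give $\widetilde{\gamma}(\Gamma(R,S))\geq\widetilde{\gamma}(K_{4,4})=2$, contradicting the hypothesis. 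Hence $|J(R)|\leq 3$.

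For $|U(R)|\leq 6$, the idea is to combine the minimum-degree lower bound from Lemma \ref{l5}(a) with the upper bound from Lemma \ref{l4}. First I would dispose of the trivial case: if $|R|<3$, then $R=\Z_2$ and $\Gamma(R,S)$ is a subgraph of $K_2$, which has nonorientable genus $0$, contradicting $\widetilde{\gamma}=1$. So I may assume $p:=|R|\geq 3$. Letting $n$ denote the number of components of $\Gamma(R,S)$, Lemma \ref{l4} applied with $\widetilde{\gamma}(\Gamma(R,S))=1$ yields
$$\delta(\Gamma(R,S))\leq 6+\frac{6-6(n+1)}{p}=6-\frac{6n}{p}.$$
Since $n\geq 1$ (the graph is non-empty), the right-hand side is strictly less than $6$, and as $\delta$ is an integer, $\delta(\Gamma(R,S))\leq 5$. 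Combining this with Lemma \ref{l5}(a), which forces $|U(R)|-1\leq\delta(\Gamma(R,S))$, we conclude $|U(R)|\leq 6$.

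I do not anticipate a serious obstacle; the argument is a direct assembly of Remark \ref{r2}, Lemma \ref{l1}(d), and Lemma \ref{l4}. The one subtle point is the strict inequality step in the $|U(R)|$ bound: without using both $n\geq 1$ and the integrality of $\delta$ to pass from $6-6n/p<6$ to $\delta\leq 5$, one would only be able to conclude $|U(R)|\leq 7$, which is too weak for the claim.
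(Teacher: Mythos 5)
Your proposal is correct and follows essentially the same route as the paper: the bound $|U(R)|\leq 6$ comes from combining Lemma \ref{l5}(a) with Lemma \ref{l4} (the paper writes $\delta(\Gamma(R,S))\leq 6-\tfrac{6}{|R|}$ and concludes by integrality), and the bound $|J(R)|\leq 3$ comes from the $K_{|J(R)|,|J(R)|}$ subgraph produced by the adjacency of $0$ and $1$ together with Remark \ref{r2} and Lemma \ref{l1}(b),(d) — the paper simply cites this argument from the proof of Theorem \ref{t1} rather than repeating it. Your explicit treatment of the degenerate case $|R|<3$ (needed for the hypothesis $p\geq 3$ of Lemma \ref{l4}) is a small point the paper leaves implicit.
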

\begin{proof}  By Lemma \ref{l5}(a), $|U(R)|-1 \leq \delta( \Gamma(R,S))$ and by Lemma \ref{l4}, $\delta( \Gamma(R,S)) \leq 6-\frac{6}{|R|}$. Thus, $|U(R)| \leq 6$. Now, it is sufficient to prove that  $|J(R)| \leq 3$. From the proof of Theorem  \ref{t1}, follows that either $|J(R)|=1$ or $|J(R)| \leq \sqrt{2}+2$. This completes the proof.
\end{proof}
\begin{corollary}\label{c4} Let $R$  be a finite local ring such that $\widetilde{\gamma}(\Gamma(R,S))=1$. Then $|R| \leq 9$. In addition, if $R$ is a finite field, then $|R|\leq 7$.
\end{corollary}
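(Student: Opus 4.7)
The plan is to exploit the very restrictive structural decomposition of a finite local ring and combine it with the two numerical bounds already furnished by Lemma \ref{l6}. Recall that in a finite local ring with maximal ideal $\mathfrak{m}$, every element is either a unit or lies in $\mathfrak{m}$, and these two possibilities are mutually exclusive. Since $J(R)=\mathfrak{m}$ for a local ring, this gives the disjoint union $R=U(R)\sqcup J(R)$ and hence the identity
\[
|R|=|U(R)|+|J(R)|.
\]

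I would then simply feed in the two estimates from Lemma \ref{l6}, namely $|U(R)|\leq 6$ and $|J(R)|\leq 3$, to conclude $|R|\leq 6+3=9$. For the second assertion, I would observe that $R$ is a field precisely when $J(R)=\{0\}$, so the same identity collapses to $|R|=|U(R)|+1\leq 7$.

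Because the argument is essentially a one-line inequality, there is no genuine obstacle to overcome; the only thing to verify carefully is the set-theoretic decomposition $R=U(R)\sqcup J(R)$ for local rings, which is standard. If one wished to push further and actually enumerate the rings attaining each bound, one could refine the analysis by noting that $|R|$ must be a prime power (via the $R/\mathfrak{m}$-module filtration $\mathfrak{m}\supset\mathfrak{m}^2\supset\cdots$), which forces $|J(R)|\in\{1,2,3\}$ to correspond to $(q,n)$ with $|R/\mathfrak{m}|=q$ and $|J(R)|=q^{n-1}$; this would single out the candidates $|R|\in\{2,3,4,5,7,9\}$ in the non-field cases, but such a refinement is not needed for the stated bound.
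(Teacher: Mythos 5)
Your argument is correct and coincides with the paper's own proof: both use the decomposition $R=U(R)\sqcup\fm$ for a finite local ring together with the bounds $|U(R)|\leq 6$ and $|J(R)|=|\fm|\leq 3$ from Lemma \ref{l6}, and specialize to $|\fm|=1$ in the field case. The extra remarks on prime-power orders are fine but, as you note, not needed.
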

\begin{proof} Let $\fm$ be the unique maximal ideal of $R$. By Lemma \ref{l6}, $|U(R)| \leq 6$ and $|\fm| \leq 3$. This implies that $|R|=|U(R)|+|\fm| \leq 6+3=9$. In addition, if $R$ is a field, then  $|R|=|U(R)|+1  \leq 6+1=7$.
\end{proof}

\begin{lemma}\label{l7} Let $R$ be a finite local ring which is not a field.
\begin{itemize}
\item [(a)] If $|R|=8$, then $\widetilde{\gamma}(\Gamma(R,S))=2$.
\item[(b)]  If $|R|=9$, then $\widetilde{\gamma}(\Gamma(R,S))\geq2$.
\end{itemize}
\end{lemma}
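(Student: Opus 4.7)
The plan is to reduce both parts to the known nonorientable genus of an explicit complete bipartite graph, using Lemma \ref{l5.2}, Remark \ref{r2}, and Lemma \ref{l1}(b),(d). For part (a) the structure of $R$ forces $\Gamma(R,S)$ to be literally $K_{4,4}$; for part (b) the graph need not be so rigidly determined, but it always contains $K_{3,6}$ as a subgraph, which is already enough.

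For (a), the first step is to pin down that a finite local ring of order $8$ which is not a field must have $|R/\fm|=2$ and $|\fm|=4$. Indeed $|R/\fm|$ is a power of $2$ dividing $8$; the case $|R/\fm|=8$ makes $R$ a field, and the case $|R/\fm|=4$, $|\fm|=2$ is excluded because $\fm/\fm^2$ would be a nonzero $R/\fm$-vector space of cardinality not a power of $4$. Once $|R/\fm|=2$ is in hand, Lemma \ref{l5.2} identifies $\Gamma(R,S)$ as the complete bipartite graph on the parts $\fm$ and $1+\fm$, which is $K_{4,4}$, and Lemma \ref{l1}(d) supplies $\widetilde{\gamma}(K_{4,4})=2$.

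For (b), the analogous argument forces $|R/\fm|=|\fm|=3$, so $|U(R)|=6$. The key step is then to exhibit $K_{3,6}$ as a subgraph of $\Gamma(R,S)$. For any unit $u$ and any $s\in S$ we have $0+s\cdot u = su\in U(R)$, so the vertex $0$ is adjacent in $\Gamma(R,S)$ to every element of $U(R)$. Remark \ref{r2} promotes each such adjacency to a complete bipartite connection between the cosets $0+\fm=\fm$ and $u+\fm$. Since $U(R)$ is the disjoint union of the two non-zero cosets modulo $\fm$, every element of $\fm$ is adjacent to every element of $U(R)$, giving $K_{|\fm|,|U(R)|}=K_{3,6}$ as a subgraph of $\Gamma(R,S)$. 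Lemma \ref{l1}(d) then gives $\widetilde{\gamma}(K_{3,6})=\lceil(3-2)(6-2)/2\rceil=2$, and Lemma \ref{l1}(b) yields $\widetilde{\gamma}(\Gamma(R,S))\geq 2$.

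The main obstacle I anticipate is recognizing that the right subgraph to extract in (b) is $K_{3,6}$, uniformly in $S$, rather than attempting a case analysis on $\overline{S}\subseteq\overline{U(R)}=\{1,-1\}$ in $R/\fm=\F_3$ (where, for instance, the case $\overline{S}=\{1\}$ produces a graph that is not complete bipartite and has girth $3$, so the cheaper Euler-type bounds of Corollary \ref{c2} and Lemma \ref{l4} only yield $\widetilde{\gamma}\ge 1$). Once $K_{3,6}$ is spotted, the ring-theoretic step only uses the standard fact that $\fm/\fm^2$ is a nonzero vector space over $R/\fm$, and the graph-theoretic conclusions are immediate from Lemma \ref{l1}.
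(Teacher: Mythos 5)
Your proposal is correct and follows essentially the same route as the paper: for (a) use Lemma \ref{l5.2} to identify $\Gamma(R,S)$ with $K_{4,4}$ and apply Lemma \ref{l1}(d), and for (b) extract $K_{3,6}$ via Remark \ref{r2} between $\fm$ and the two unit cosets. The only differences are cosmetic: you justify $|\fm|=4$ (resp.\ $|\fm|=3$) by an elementary $\fm/\fm^2$ dimension count where the paper cites the classification of small local rings, and in (b) you observe directly that $0$ is adjacent to every unit, whereas the paper invokes $2\in U(R)$ from $|R|$ being odd to reach the same coset picture.
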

\begin{proof}
Let $\fm$ be the unique maximal ideal of $R$.
\begin{itemize}
\item[(a)] Since $R$ is not a field, in view of \cite[p. 687]{Local+rings+I}, $|\fm|=4$. Hence, $|\frac{R}{\fm}|=2$ and by Lemma \ref{l5.2}, $\Gamma(R, S)$ is a complete bipartite graph with parts $\fm$ and $1+\fm$. Thus, $\Gamma(R, S) \cong K_{4,4}$ and so by Lemma \ref{l1}(d), $\widetilde{\gamma}(\Gamma(R,S))=2$.
\item[(b)] Since $|R|$ is odd, by \cite[Corollary 2.3]{Generalized+unit}, $2 \in U(R)$. It follows that $0$ is adjacent to $2$. Now, by Remark \ref{r2}, every element of $\fm$ is adjacent to every element of $1+\fm$ and $2+\fm$.
On the other hand, since $R$ is not a field, $|\fm|=3$. Thus, $K_{3,6}$ is a subgraph of $\Gamma(R, S)$ and so  by parts (b) and (d) of Lemma \ref{l1}, $\widetilde{\gamma}(\Gamma(R,S))\geq2$.
\end{itemize}
\end{proof}
\begin{lemma}\label{t3} {\rm (\cite [Theorem 3.7]{Generalized+unit})} Let $R$ be a finite ring. Then $\Gamma(R,S)$ is planar if and only if one of the following conditions holds.
      \begin{itemize}
          \item[(a)] $R\cong (\Z_2)^\ell \times T$, where $\ell \geqslant 0$ and $T$ is isomorphic to one of the following rings:
          $$\Z_2, \Z_3, \Z_4\  or \  \frac{\Z_2[x]}{(x^2)}.$$
          \item[(b)] $R\cong\F_4$.
          \item[(c)] $R\cong (\Z_2)^\ell \times \F_4$, where $\ell > 0$ with $S=\{1\}$.
          \item[(d)] $R\cong\Z_5$ with $S=\{1\}$.
          \item[(e)] $R\cong\Z_3 \times \Z_3$ with $S=\{(1,1)\}$, $S=\{(1,-1)\}$ or $S=\{(-1,1)\}$.
      \end{itemize}
\end{lemma}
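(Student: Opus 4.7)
The plan is to extract strong structural constraints on $R$ from planarity and then dispatch the remaining small cases. The ``if'' direction is the easier half: for each $R$ in the list one would exhibit an explicit plane embedding of $\Gamma(R,S)$, using Lemma~\ref{l5}(b)(i) together with Remark~\ref{r1} to reduce to a handful of small ``core'' graphs. The substantive direction is necessity, which I would organize as follows.

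First I would establish two global bounds. Since every planar graph on at least three vertices has minimum degree at most $5$, Lemma~\ref{l5}(a) forces $|U(R)|\leq 6$. Next, Remark~\ref{r2} lifts each edge $x\sim y$ of $\Gamma(R,S)$ to a complete bipartite subgraph $K_{|J(R)|,|J(R)|}$ between the cosets $x+J(R)$ and $y+J(R)$; applied to the always-present edge $0\sim 1$, this yields $K_{|J(R)|,|J(R)|}\subseteq \Gamma(R,S)$, and Kuratowski's theorem then forces $|J(R)|\leq 2$.

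Next I would invoke the Artinian decomposition $R\cong R_1\times\cdots\times R_n$ into finite local rings. Since $|J(R)|=\prod_i|J(R_i)|\leq 2$, at most one factor is not a field; any non-field factor has $|J(R_i)|=2$ with residue field $\Z_2$, so $|R_i|=4$ and $R_i$ is $\Z_4$ or $\Z_2[x]/(x^2)$. For the field factors, multiplicativity of units together with $|U(R)|\leq 6$ bounds both their sizes and their number. Isolating the $\Z_2$ factors as $R\cong(\Z_2)^\ell\times T$ and invoking Lemma~\ref{l5}(b)(i), planarity of $\Gamma(R,S)$ is controlled by the tensor $\overline{\Gamma}(\Z_2,\{1\})^{\otimes \ell}\otimes \overline{\Gamma}(T,S')$, so one only has to decide which small cores $T$ remain planar after tensoring with copies of $K_2$.

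The rest of the proof, and the main obstacle, is a finite but intricate case analysis: for each surviving candidate $(T,S')$ one must either exhibit a plane drawing or produce a Kuratowski subdivision. Typical rulings-out use Lemma~\ref{t0.1} (forcing a $K_n$ when the factors are fields and $|S|\geq 2$) and Lemma~\ref{l5.2} (giving a complete bipartite $\Gamma(R,S)$ over local rings with residue field $\Z_2$). The borderline cases are the hardest: $\Z_5$ is planar exactly when $S=\{1\}$; $\Z_3\times\Z_3$ is planar exactly for the three singleton sets listed; $(\Z_2)^\ell\times\F_4$ requires $S=\{1\}$; and products such as $\Z_3\times\Z_4$ or $\Z_3\times\F_4$ must be excluded by locating a $K_{3,3}$ subdivision hidden inside the tensor product. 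Pinning down these tensor-product Kuratowski subgraphs is where the argument is most delicate and most prone to an overlooked case.
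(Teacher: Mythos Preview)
The paper does not prove this lemma at all: it is quoted as \cite[Theorem~3.7]{Generalized+unit} and stated without argument, serving only as an input to the later classification of projective $\Gamma(R,S)$. So there is no ``paper's own proof'' to compare your proposal against.

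That said, your outline is broadly in the spirit of how such results are proved in this literature (and indeed mirrors the arguments the present paper uses for the projective case): bound $|U(R)|$ via a minimum-degree argument, bound $|J(R)|$ via the $K_{|J(R)|,|J(R)|}$ coming from Remark~\ref{r2}, decompose $R$ as a product of local rings, peel off the $\Z_2$ factors using Remark~\ref{r1}, and then handle a finite list of small cores. One caution: your claim that planarity forces $\delta\le 5$ and hence $|U(R)|\le 6$ via Lemma~\ref{l5}(a) actually gives $|U(R)|-1\le 5$, i.e.\ $|U(R)|\le 6$, which is fine; but for the Jacobson-radical bound you should note that $K_{3,3}$ is already nonplanar, so $|J(R)|\le 2$ follows directly, not merely from ``Kuratowski's theorem'' in the abstract. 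The genuine work, as you correctly flag, lies in the exhaustive small-ring case analysis, and that is exactly what the cited paper \cite{Generalized+unit} carries out; the present paper simply imports the conclusion.
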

\begin{theorem}\label{t4}
Let $R$ be a finite local ring. Then $\Gamma(R, S)$ is projective if and only if $R\cong \Z_5$ with $S\neq \{1\}$.
\end{theorem}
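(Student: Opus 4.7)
My plan is a case analysis driven by the size bounds of Corollary~\ref{c4}. For the sufficient direction, when $R\cong\Z_5$ and $S\neq\{1\}$, we have either $|S|\geq 2$ or $S=\{-1\}$, so Lemma~\ref{t0.1} yields $\Gamma(\Z_5,S)\cong K_5$, and $\widetilde{\gamma}(K_5)=1$ by Lemma~\ref{l1}(c).

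For the necessary direction, assume $\widetilde{\gamma}(\Gamma(R,S))=1$. Corollary~\ref{c4} restricts $|R|\leq 9$, and $|R|\leq 7$ when $R$ is a field. If $R$ is a non-field local ring, the possible orders are $4,8,9$: Lemma~\ref{l7} eliminates $|R|\in\{8,9\}$, and both local rings of order $4$ (namely $\Z_4$ and $\Z_2[x]/(x^2)$) have planar $\Gamma$ by Lemma~\ref{t3}(a). If $R$ is a field, the options are $\F_2,\F_3,\F_4,\Z_5,\Z_7$; Lemma~\ref{t3}(a,b,d) disposes of the first three rings and of $\Z_5$ with $S=\{1\}$ (all planar), while $\Z_5$ with $S\neq\{1\}$ is the desired conclusion. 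For $R=\Z_7$ with $|S|\geq 2$ or $S=\{-1\}$, Lemma~\ref{t0.1} makes $\Gamma\cong K_7$, and $\widetilde{\gamma}(K_7)=3\neq 1$ by Lemma~\ref{l1}(c).

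The main obstacle is the remaining subcase $R=\Z_7$ with $S=\{1\}$: here $\Gamma(\Z_7,\{1\})$ has $7$ vertices and $18$ edges (its non-edges are the three pairs $\{x,-x\}\subset U(\Z_7)$), and is isomorphic to the complete multipartite graph $K_{1,2,2,2}$. The Euler bound of Lemma~\ref{l3} only yields $\widetilde{\gamma}\geq 1$, so the edge count alone is too weak. My plan is to argue directly that no projective embedding exists: such an embedding would force $f=12$ faces with $3f=2q$, so every face is a triangle, making the embedding a triangulation. Parametrizing the six apex face-triangles incident to vertex $0$ by three permutations $\sigma_{bc},\sigma_{bd},\sigma_{cd}\in S_2$ of the matching classes $\{1,6\},\{2,5\},\{3,4\}$, the requirement that exactly two non-apex triangles complete the face list forces $\sigma_{bd}=\sigma_{cd}\circ\sigma_{bc}$, whereas the link at $0$ being a single Hamiltonian $6$-cycle of the octahedron $K_{2,2,2}$ requires $\sigma_{bd}^{-1}\circ\sigma_{cd}\circ\sigma_{bc}$ to be the non-trivial transposition; these are incompatible, so $\widetilde{\gamma}(\Gamma(\Z_7,\{1\}))\geq 2$. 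Alternatively, one can invoke the Glover--Huneke--Wang list of $103$ topologically minimal forbidden subgraphs for the projective plane and verify that $K_{1,2,2,2}$ contains a subdivision of one of them.
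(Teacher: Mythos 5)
Your overall skeleton agrees with the paper's: sufficiency via Lemma \ref{t0.1} and Lemma \ref{l1}(c); the bound $|R|\leq 9$ (resp.\ $\leq 7$ for fields) from Corollary \ref{c4}; elimination of the non-field local rings of orders $8$ and $9$ by Lemma \ref{l7} and of orders $\leq 4$ by planarity (Lemma \ref{t3}); and $\Gamma(\Z_7,S)\cong K_7$ with $\widetilde{\gamma}(K_7)=3$ when $|S|\geq 2$ or $S=\{-1\}$. All of that is fine. The entire weight of the theorem therefore rests on the one subcase $R\cong\Z_7$, $S=\{1\}$, which the paper settles by recognizing $\Gamma(\Z_7,\{1\})$ as the graph $A_2$ in the Glover--Huneke--Wang list of $103$ irreducible graphs for the projective plane \cite{103+graphs}; your fallback suggestion is exactly this route, but you leave the verification undone, so it cannot carry the proof as written.

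Your primary argument for that subcase has a genuine gap. The Euler count is right ($p=7$, $q=18$, $\chi=1$ give $f=12$ and all faces triangles), but then, since $\deg(0)=6$, exactly six faces are incident to $0$ and hence exactly \emph{six} (not two) faces are triangles of the octahedron on $\{1,\dots,6\}$; equivalently, exactly two of the eight transversal triangles of the octahedron fail to be faces. So the assertion that ``exactly two non-apex triangles complete the face list'' is inconsistent with your own count $f=12$. Moreover, parametrizing the apex faces by three permutations in $S_2$ presupposes that the link of $0$ (a Hamiltonian $6$-cycle of the octahedron) meets each pair of classes in a perfect matching; this is not forced---for instance the link could be $1\,2\,3\,6\,4\,5$, whose two edges between $\{1,6\}$ and $\{2,5\}$ both use the vertex $1$---so the claimed relations among $\sigma_{bc},\sigma_{bd},\sigma_{cd}$ do not cover all embeddings and the contradiction cannot be checked. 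The idea is salvageable, and more simply: every octahedron edge not on the link lies on two faces, neither of which contains $0$, so \emph{both} transversal triangles through such an edge are faces; hence each of the two non-face transversal triangles can use only link edges, i.e.\ all three of its edges would lie on the $6$-cycle forming the link of $0$, which contains no triangle---a contradiction, giving $\widetilde{\gamma}(\Gamma(\Z_7,\{1\}))\geq 2$. With this (or with the explicit identification against the list of \cite{103+graphs}, as in the paper) the remaining subcase closes and your proof is complete.
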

\begin{proof}
Suppose that $\widetilde{\gamma}(\Gamma(R,S))=1$. By Corollary \ref{c4} and Lemma \ref{l7}, $|R| \leq 7$. If either $|R| \leq 4$ or $R\cong \Z_5$ with $S=\{1\}$, then by Lemma \ref{t3}, $\Gamma(R, S)$ is planar which is not projective.  On the other hand, since $R$ is a finite local ring, the order of $R$ is a power of a prime number. Thus, either $R \cong \Z_5$ with $S \neq \{1\}$ or $R \cong \Z_7$. If $R \cong \Z_5$ with $S \neq \{1\}$, then by Lemma \ref{t0.1}, $\Gamma(R, S) \cong K_5$ and so  by Lemma \ref{l1}(c), $\widetilde{\gamma}(\Gamma(R,S))=1$.
Now, suppose that $R \cong \Z_7$. If either $|S|\geq 2$ or $S=\{-1\}$, then by Lemma \ref{t0.1}, $\Gamma(R, S) \cong K_7$ and in this case by  Lemma \ref{l1}(c), $\widetilde{\gamma}(\Gamma(R,S))=3$. If $S=\{1\}$, then $\Gamma(\Z_7, \{1\})$, as shown in Figure \ref{f1},  is isomorphic to the graph $A_2$ which is one of the 103 graphs listed in \cite{103+graphs}. Thus, $\Gamma(\Z_7, \{1\})$ is not projective.
\end{proof}

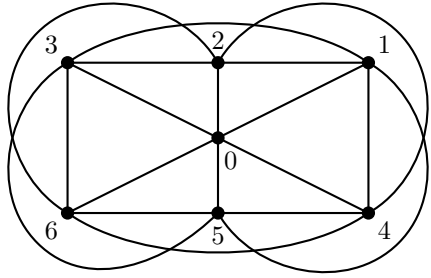
\begin{figure}[ht]
\begin{tikzpicture}[scale=1]
\coordinate (a0) at (0,0);
\fill (a0) circle (2.5pt);
\coordinate (a2) at (0,1);
\fill (a2) circle (2.5pt);
\coordinate (a5) at (0,-1);
\fill (a5) circle (2.5pt);
\coordinate (a1) at (2,1);
\fill (a1) circle (2.5pt);
\coordinate (a3) at (-2,1);
\fill (a3) circle (2.5pt);
\coordinate (a4) at (2,-1);
\fill (a4) circle (2.5pt);
\coordinate (a6) at (-2,-1);
\fill (a6) circle (2.5pt);

\node [above right] at (a1) {1};
\node [above left] at (a3) {3};
\node [below right] at (a4) {4};
\node [below left] at (a6) {6};
\draw[color=black] (.17,-.3) node {0};
\draw[color=black] (0,1.3) node {2};
\draw[color=black] (0,-1.3) node {5};

\draw [line width=.8pt] (a1) .. controls (1, 1.7) and (-1,1.7) .. (a3);
\draw [line width=.8pt] (a4) .. controls (1, -1.7) and (-1,-1.7) .. (a6);
\draw [line width=.8pt] (a2)  to [out=60, in=135] (2.4,1.4) to [out=-45, in=30] (a4);
\draw [line width=.8pt] (a2)  to [out=120, in=45] (-2.4,1.4) to [out=225, in=150] (a6);
\draw [line width=.8pt] (a1)  to [out=-30, in=45] (2.4,-1.4) to [out=225, in=-60] (a5);
\draw [line width=.8pt] (a3)  to [out=210, in=135] (-2.4,-1.4) to [out=-45, in=225] (a5);

\draw [line width=.8pt] (-2,-1) rectangle (2,1);
\draw [line width=.8pt] (-2,-1) -- (2,1);
\draw [line width=.8pt] (-2,1) --(2,-1);
\draw [line width=.8pt] (0,-1) --(0,1);
\end{tikzpicture}
\caption {The graph $\Gamma(\Z_7, \{1\})$.}
\label{f1}
\end{figure}

Now, we determine all finite non-local rings $R$ whose $\Gamma(R, S)$ is projective. First, we state some especial cases.
\begin{lemma}\label{l8} Let $R \cong \Z_2 \times T$ and  $T \in \{\Z_5, \Z_7, \Z_9, \frac{\Z_3[x]}{(x^2)} \}$. Then $\widetilde{\gamma}(\Gamma(R,S))\geq 2$.
\end{lemma}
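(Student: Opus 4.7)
The plan is to exploit the fact that the $\Z_2$ factor forces a bipartite structure on $\Gamma(\Z_2\times T,S)$ and then apply the edge-count bound of Corollary \ref{c2}.

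First I would observe that $U(\Z_2\times T)=\{1\}\times U(T)$, so every admissible $S$ has the form $\{1\}\times S'$ with $\emptyset\neq S'\subseteq U(T)$ and $(S')^{-1}\subseteq S'$. For any two vertices $(\epsilon_1,t_1),(\epsilon_2,t_2)$ of $R$ and any $s=(1,s')\in S$, the sum $(\epsilon_1,t_1)+s(\epsilon_2,t_2)=(\epsilon_1+\epsilon_2,\,t_1+s't_2)$ lies in $\{1\}\times U(T)$ only when $\epsilon_1+\epsilon_2=1$ in $\Z_2$. Consequently $\Gamma(R,S)$ is bipartite with parts $\{0\}\times T$ and $\{1\}\times T$, and in particular is triangle-free.

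Next I would lower-bound the edge count by a degree argument. Fixing any $s_0'\in S'$, each $(0,t)\in\{0\}\times T$ is adjacent to every $(1,b)$ with $t+s_0'b\in U(T)$, and the set of such $b$ is the coset $(s_0')^{-1}\bigl(U(T)-t\bigr)$, which has exactly $|U(T)|$ elements. Hence every vertex of $\Gamma(R,S)$ has degree at least $|U(T)|$, and summing yields
\[
|E(\Gamma(R,S))|\;\geq\;\tfrac{1}{2}\cdot 2|T|\cdot|U(T)|\;=\;|T|\cdot|U(T)|.
\]
Since $\Gamma(R,S)$ is triangle-free with $p=2|T|\geq 10\geq 3$ vertices and has at least one component, Corollary \ref{c2} gives
\[
\widetilde{\gamma}(\Gamma(R,S))\;\geq\;\frac{|E|}{2}-p+2\;\geq\;\frac{|T|\cdot|U(T)|}{2}-2|T|+2.
\]

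To finish, I would substitute the four pairs $(|T|,|U(T)|)\in\{(5,4),(7,6),(9,6),(9,6)\}$ coming from $T\in\{\Z_5,\Z_7,\Z_9,\frac{\Z_3[x]}{(x^2)}\}$, obtaining lower bounds of $2,\,9,\,11,\,11$ respectively, all at least $2$. The only delicate case is $T=\Z_5$, where the inequality is saturated; this is precisely where one needs the triangle-free strengthening $q/2$ rather than the general $q/3$ in Corollary \ref{c2}. No case analysis on $S'$ is required because both the bipartition and the degree estimate are uniform in the choice of admissible $S$.
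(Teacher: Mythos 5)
Your proposal is correct and follows essentially the same route as the paper: establish that the $\Z_2$ factor makes $\Gamma(\Z_2\times T,S)$ bipartite (hence triangle-free), bound the minimum degree below by $|U(T)|=|U(R)|$ to get at least $|T|\cdot|U(T)|$ edges, and apply the triangle-free part of Corollary \ref{c2}, yielding the same bounds $2,9,11,11$. The only difference is presentational: you verify the bipartition and the degree bound by direct computation in the product ring, where the paper cites Lemma \ref{l5.2} and the identity $\Gamma(R,S)=\overline{\Gamma}(R,S)$ together with Lemma \ref{l5}(a).
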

\begin{proof}
Since $\Gamma(\Z_2, \{1\})=\overline{\Gamma}(\Z_2, \{1\})$, then $\Gamma(R, S)=\overline{\Gamma}(R, S)$
and so by Lemma \ref{l5}(a), for any vertex $x$ of $\Gamma(R, S)$, $\T{deg}(x)\geq |U(R)|$.
On the other hand, since $\fm=\{0\} \times T$ is a maximal ideal of $R$ such that $|\frac{R}{\fm}|=2$, then by Lemma \ref{l5.2}, $\Gamma(R, S)$ is a bipartite graph. Hence, $\Gamma(R, S)$ has no triangles. Now, consider the following cases:

{\bf Case 1}: $T=\Z_5$. In this case $|R|=10$ and $|U(R)|=4$. It follows that $\Gamma(R,S)$ has at least $20$ edges. Thus,  by second part of Corollary \ref{c2}, $\widetilde{\gamma}(\Gamma(R,S))\geq 2$.


{\bf Case 2}: $T=\Z_7$. In this case $|R|=14$ and $|U(R)|= 6$.  Hence, $\Gamma(R,S)$ has at least $42$ edges and so by second part of Corollary \ref{c2}, $\widetilde{\gamma}(\Gamma(R,S))\geq 9$.

 {\bf Case 3}: $T \in \{\Z_9, \frac{\Z_3[x]}{(x^2)}\}$. In this case $|R|=18$ and $|U(R)|= 6$.  Thus, $\Gamma(R,S)$ has at least $54$ edges and so by second part of Corollary \ref{c2}, $\widetilde{\gamma}(\Gamma(R,S))\geq 11$.
\end{proof}

\begin{lemma}\label{l9} Let $R \cong R_1 \times R_2$, $R_1 \in \{\Z_3, \Z_4, \frac{\Z_2[x]}{(x^2)}, \F_4\}$ and $R_2 \in \{\Z_4,\frac{\Z_2[x]}{(x^2)} \}$. Then $\widetilde{\gamma}(\Gamma(R,S))\geq2$.
\end{lemma}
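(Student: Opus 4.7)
The plan is to split into two cases according to whether $R_1$ is a field or not. When $R_1 \in \{\Z_4, \frac{\Z_2[x]}{(x^2)}\}$, one has $|J(R)| = |J(R_1)| \cdot |J(R_2)| = 4$, and the strategy is to exhibit a $K_{4,4}$ subgraph. For any fixed $s \in S$ the identity $0 + s \cdot s^{-1} = 1 \in U(R)$ shows that $0$ is adjacent to $s^{-1}$ in $\Gamma(R,S)$; since $s^{-1} \in U(R)$ lies outside $J(R)$, Remark \ref{r2} applied to this edge supplies a copy of $K_{4,4}$ with parts $J(R)$ and $s^{-1} + J(R)$, so parts (b) and (d) of Lemma \ref{l1} give $\widetilde{\gamma}(\Gamma(R,S)) \geq \widetilde{\gamma}(K_{4,4}) = 2$.

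When instead $R_1 \in \{\Z_3, \F_4\}$, $R_1$ is a field and $\fm := R_1 \times \fm_2$ is a maximal ideal of $R$ with $|R/\fm| = 2$; by Lemma \ref{l5.2}, $\Gamma(R,S)$ is then bipartite (hence triangle-free), and I would bound its number of edges from below. The baseline $\deg(v) \geq |U(R)| - 1$ from Lemma \ref{l5}(a) is not strong enough when $R_1 = \Z_3$, so the plan is to sharpen it by one via the following observation: for every $s \in U(R)$, a direct inspection of the two possibilities for $R_2$ gives $1 + s_2 \in \fm_2$, so $1 + s$ is a nonunit of $R$ and $(1+s)v \in \fm$ fails to be a unit for every $v \in R$. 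Hence $v$ is not adjacent to itself via $s$, and the bijection $u \mapsto v + s u$ of $R$ then shows that $\deg(v) \geq |U(R)|$ already through a single $s$. Summing over the vertices yields $q \geq |R| \cdot |U(R)|/2$, which is at least $24$ when $(|R|,|U(R)|) = (12,4)$ and at least $48$ when $(|R|,|U(R)|) = (16,6)$. The triangle-free form of Corollary \ref{c2} then delivers $\widetilde{\gamma}(\Gamma(R,S)) \geq q/2 - p + n + 1 \geq 2$ in both subcases.

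The principal obstacle is precisely this degree refinement in the $R_1 = \Z_3$ subcase: the crude bound $\deg(v) \geq |U(R)| - 1 = 3$ only produces $q \geq 18$, and the triangle-free form of Corollary \ref{c2} then yields a negative and hence useless lower bound. Recognising that the nonunitness of $1+s$ in $R_2$ upgrades the degree by one, and so produces the critical $q \geq 24$, is what makes the bipartite edge count tight enough to conclude.
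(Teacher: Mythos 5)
Your proof is correct, and it coincides with the paper's argument on only half of the cases. The paper treats all four choices of $R_1$ uniformly: since $1+s_2$ is a non-unit of $R_2$ for every unit $s_2$, the graph has no loops, so $\Gamma(R,S)=\overline{\Gamma}(R,S)$ and Lemma~\ref{l5}(a) gives $\deg(x)\geq |U(R)|$ immediately; together with bipartiteness from Lemma~\ref{l5.2} and the triangle-free part of Corollary~\ref{c2}, this settles the three cases $(|R|,|U(R)|)=(12,4),(16,4),(16,6)$. Your handling of $R_1\in\{\Z_3,\F_4\}$ is exactly this argument in substance --- you simply re-derive the degree bound by hand via the bijection $u\mapsto v+su$ and the observation that $(1+s)v\in\fm$, rather than quoting $\Gamma=\overline{\Gamma}$ plus Lemma~\ref{l5}(a) --- and your diagnosis that the cruder bound $\deg\geq|U(R)|-1$ gives only $q\geq 18$ and hence a useless estimate for $\Z_3\times R_2$ is precisely why the refinement is needed. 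Where you genuinely diverge is $R_1\in\{\Z_4,\Z_2[x]/(x^2)\}$: instead of the edge count (the paper's Case~2, with $q\geq 32$, $p=16$), you use $|J(R)|=|J(R_1)|\,|J(R_2)|=4$ and Remark~\ref{r2} applied to the edge joining $0$ and $s^{-1}$ to exhibit $K_{4,4}$ on the disjoint cosets $J(R)$ and $s^{-1}+J(R)$, then conclude by parts (b) and (d) of Lemma~\ref{l1}. That step is sound (all $16$ cross pairs are distinct vertices since $s^{-1}\notin J(R)$) and is the same device the paper deploys elsewhere, e.g.\ in Theorem~\ref{t1} and Lemma~\ref{l7}; it buys a structural subgraph certificate that needs no counting for those subcases, at the cost of a case split that the paper's uniform computation avoids.
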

\begin{proof} Since for every $s \in U(R_2)$, $1+s\not \in U(R_2)$, we have $\Gamma(R, S)=\overline{\Gamma}(R, S)$ and so by Lemma \ref{l5}(a), for any vertex $x$ of $\Gamma(R, S)$, $\T{deg}(x)\geq |U(R)|$.
On the other hand, by Lemma \ref{l5.2}, $\Gamma(R, S)$ is a bipartite graph. Indeed, if $\fn$ be  the unique maximal ideal of $R_2$, then $\fm=R_1 \times \fn$ is a maximal ideal of $R$ such that $|\frac{R}{\fm}|=2$. Hence, $\Gamma(R, S)$ has no triangles. Now, consider the following cases:

{\bf Case 1}: $R_1= \Z_3$. In this case $|R|=12$ and $|U(R)|=4$. It follows that $\Gamma(R,S)$ has at least $24$ edges. Thus,  by second part of Corollary \ref{c2}, $\widetilde{\gamma}(\Gamma(R,S))\geq 2$.

 {\bf Case 2}: $R_1 \in \{\Z_4, \frac{\Z_2[x]}{(x^2)}\}$. In this case $|R|=16$ and $|U(R)|=4$. Hence,  $\Gamma(R,S)$ has at least $32$ edges and so by second part of  Corollary \ref{c2}, $\widetilde{\gamma}(\Gamma(R,S))\geq 2$.

 {\bf Case 3}: $R_1= \F_4$. In this case $|R|=16$ and $|U(R)|=6$.  Thus, $\Gamma(R,S)$ has at least $48$ edges and so by second part of Corollary \ref{c2}, $\widetilde{\gamma}(\Gamma(R,S))\geq 10$.
 \end{proof}
 \begin{lemma}\label{l10}
 Let $R\cong \Z_3 \times \F_4$. Then $\widetilde{\gamma}(\Gamma(R,S))\geq2$.
 \end{lemma}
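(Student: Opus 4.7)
The plan is to reduce to the finitely many inverse-closed subsets of $U(R)$ that are minimal under containment, and then apply Corollary \ref{c2} to each of the resulting graphs. Write $\F_4 = \{0, 1, \alpha, \alpha^2\}$ with $\alpha^3 = 1$. Then $|R| = 12$ and $U(R) = \{1, 2\} \times \{1, \alpha, \alpha^2\}$ has order $6$; the orbits of $s \mapsto s^{-1}$ on $U(R)$ are the four sets
$$\{(1,1)\},\quad \{(2,1)\},\quad \{(1, \alpha), (1, \alpha^2)\},\quad \{(2, \alpha), (2, \alpha^2)\}.$$
Any non-empty inverse-closed $S$ must contain at least one of these orbits, call it $S_0$, so by Lemma \ref{l5}(b)(ii) and Lemma \ref{l1}(b) it suffices to prove $\widetilde{\gamma}(\Gamma(R, S_0)) \geq 2$ in each of the four cases.

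For each $S_0$ I would translate the adjacency rule $\exists s \in S_0 : x + sy \in U(R)$ componentwise. In the two singleton cases it factors as a single $\Z_3$-condition ($x_1 + y_1 \neq 0$ or $x_1 \neq y_1$, respectively) together with $x_2 \neq y_2$ in $\F_4$; every vertex then has exactly $2 \cdot 3 = 6$ neighbors, so $q = 36$. In the two pair cases, the two values $s_2 \in \{\alpha, \alpha^2\}$ between them guarantee that the $\F_4$-part $x_2 + s_2 y_2 \neq 0$ fails for every $s \in S_0$ only when $x_2 = y_2 = 0$; so adjacency collapses to a $\Z_3$-condition together with $(x_2, y_2) \neq (0, 0)$. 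Partitioning $R$ into the blocks $\{(0,0)\}$, $\{0\} \times \F_4^*$, $\Z_3^* \times \{0\}$, $\Z_3^* \times \F_4^*$ and enumerating edges within and between blocks yields $q = 42$ and $q = 45$ in the two pair cases.

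In every case the graph is connected: $(0, 0)$ is adjacent to every element of $U(R)$ (because $sy \in U(R)$ whenever $s, y \in U(R)$), and every non-unit vertex is easily seen to be adjacent to at least one unit. Applying Corollary \ref{c2} with $p = 12$, $n = 1$, and $q \geq 36$ then gives
$$\widetilde{\gamma}(\Gamma(R, S_0)) \geq \frac{q}{3} - p + n + 1 \geq 12 - 12 + 2 = 2,$$
completing the argument.

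The main obstacle is that in the two singleton cases the bound from Corollary \ref{c2} is extremely tight, producing exactly $2$ with no slack, so the edge count of $36$ must be verified on the nose. Fortunately the adjacency rule in these cases decomposes cleanly into independent conditions on the two factors $\Z_3$ and $\F_4$, making the count essentially mechanical.
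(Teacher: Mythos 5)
Your proof is correct and follows essentially the same route as the paper: both reduce, via Lemma \ref{l5}(b)(ii) and Lemma \ref{l1}(b), to the inverse-closed pair $\{s,s^{-1}\}\subseteq S$ and then apply the edge bound of Corollary \ref{c2} to a $12$-vertex graph with at least $36$ edges. The only difference is mechanical: you verify the edge count by a direct case analysis of the four inversion orbits (getting $36$, $36$, $42$, $45$), while the paper obtains $6$-regularity uniformly from the tensor-product decomposition of Lemma \ref{l5}(b)(i) together with Lemmas \ref{t0.1} and \ref{l5}(a); your connectivity check is harmless but unnecessary, since the bound in Corollary \ref{c2} only improves as the number of components grows.
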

\begin{proof}   Since $S$ is a non-empty subset of $U(R)$ such that $S^{-1}\subseteq S$, there exists  an element $(s_1, s_2)\in S$ where
 $s_1\in \{1, -1\}$ and $(s_1, s_2)^{-1} \in S $. Set $S_2:=\{ s_2, {s_2} ^ {-1}\}$,  $S^\prime:=\{s_1\}\times S_2$ and $G:=\overline{\Gamma}(\Z_3,\{s_1\})$.  Since $\T{Char} (\F_4)=2$, then $|S_2|=1$ if and only if $s_2=-1$ and so by Lemma \ref{t0.1}, $\Gamma(\F_4, S_2) \cong K_4$. On the other hand, by Lemma \ref{l5}(b)(i),
$$\Gamma(R, S^\prime) \cong \overline{\Gamma}(\Z_3,\{s_1\}) \otimes \overline{\Gamma}(\F_4, S_2).$$
Hence,  $G \otimes K_4$ is a subgraph of $\Gamma(R, S^\prime)$.  Note that by Lemma \ref{l5}(b)(i) and Lemma \ref{t0.1},
\begin{align*}
\Gamma(\Z_3 \times \F_4, \{(s_1,-1)\})& = \overline{\Gamma} (\Z_3 \times \F_4, \{(s_1,-1)\})\\
& \cong G\otimes K_4.
\end{align*}
Thus, by Lemma \ref{l5}(a), $G \otimes K_4$ is a 6-regular graph and so by Corollary \ref{c2},
$\widetilde{\gamma}(G \otimes K_4) \geq 2$. Now, by Lemma \ref{l1}(b) and Lemma \ref{l5}(b)(ii), we have the following inequalities:
\begin{align*}
\widetilde{\gamma}(\Gamma(R,S)) & \geq   \widetilde{\gamma}(\Gamma(R,S^\prime))\\
 & \geq  \widetilde{\gamma}(G \otimes K_4)\\
 & \geq 2.
\end{align*}
\end{proof}

\begin{lemma}\label{l11} Let $R \cong \Z_2 \times R_1 \times R_2$ where $R_1$ and $R_2$ are local rings of order $3$ or $4$.
Then $\widetilde{\gamma}(\Gamma(R,S))\geq2$.
\end{lemma}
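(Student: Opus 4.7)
The plan is to follow the blueprint of Lemmas \ref{l8} and \ref{l9}: establish that $\Gamma(R,S)=\overline{\Gamma}(R,S)$ is bipartite, apply Lemma \ref{l5}(a) to obtain a lower bound on the minimum degree, and then invoke the triangle-free form of Corollary \ref{c2} to force $\widetilde{\gamma}(\Gamma(R,S))\geq 2$ by a straightforward edge count.

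First I would verify that $\Gamma(R,S)$ has no loops, so that $\Gamma(R,S)=\overline{\Gamma}(R,S)$. Every $s\in S\subseteq U(R)=\{1\}\times U(R_1)\times U(R_2)$ has first coordinate $1$, and hence for any vertex $x=(x_0,x_1,x_2)$ the element $x+sx$ has first coordinate $2x_0=0\notin U(\Z_2)$; no loop can occur. Moreover, $\fm:=\{0\}\times R_1\times R_2$ is a maximal ideal of $R$ with $|R/\fm|=2$, so Lemma \ref{l5.2} shows that $\Gamma(R,S)$ is bipartite, and in particular triangle-free.

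Next, Lemma \ref{l5}(a) applied to the loopless graph $\Gamma(R,S)=\overline{\Gamma}(R,S)$ yields $\T{deg}(x)\geq |U(R)|=|U(R_1)|\,|U(R_2)|$ for every vertex $x$. Since each $R_i$ is one of $\Z_3,\Z_4,\Z_2[x]/(x^2),\F_4$, we have $|U(R_i)|\geq 2$, so $|U(R)|\geq 4$ and the number of edges satisfies
$$q\geq \tfrac{1}{2}|R|\cdot|U(R)|\geq 2|R|.$$

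Finally, applying the second (triangle-free) part of Corollary \ref{c2} to the graph on $p=|R|\geq 18\geq 3$ vertices with $n\geq 1$ components gives
$$\widetilde{\gamma}(\Gamma(R,S))\geq \frac{q}{2}-p+n+1\geq |R|-|R|+1+1=2,$$
as desired. The main obstacle I anticipate is purely bookkeeping across the several isomorphism types of $(R_1,R_2)$, but since each ingredient depends only on the uniform estimate $|U(R_i)|\geq 2$ and on the existence of the index-two maximal ideal $\fm$, a single unified argument handles all cases simultaneously without splitting into subcases.
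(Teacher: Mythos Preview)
Your proof is correct and follows essentially the same approach as the paper: show $\Gamma(R,S)=\overline{\Gamma}(R,S)$ via the $\Z_2$ factor, deduce bipartiteness from the index-two maximal ideal $\fm=\{0\}\times R_1\times R_2$ using Lemma \ref{l5.2}, apply Lemma \ref{l5}(a) to get minimum degree $\geq |U(R)|$, and finish with the triangle-free bound of Corollary \ref{c2}. The only difference is cosmetic: the paper splits into three cases according to $(|R_1|,|R_2|)\in\{(3,3),(3,4),(4,4)\}$ and checks the edge count in each, whereas you observe uniformly that $|U(R)|\geq 4$ gives $q\geq 2|R|$ and hence $\widetilde{\gamma}\geq q/2-p+n+1\geq 2$ in one line.
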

\begin{proof}  Without loss of generality we can assume that $|R_1| \leq |R_2|$. Since $\Gamma(\Z_2, \{1\})=\overline{\Gamma}(\Z_2, \{1\})$, then $\Gamma(R, S)=\overline{\Gamma}(R, S)$
and so by Lemma \ref{l5}(a), for any vertex $x$ of $\Gamma(R, S)$, $\T{deg}(x)\geq |U(R)|$.
On the other hand, since $\fm=\{0\}\times R_1 \times R_2$ is a maximal ideal of $R$ such that $|\frac{R}{\fm}|=2$, then by Lemma \ref{l5.2}, $\Gamma(R, S)$ is a bipartite graph. Hence, $\Gamma(R, S)$ has no triangles. Now, consider the following cases:

{\bf Case 1}: $|R_1|= |R_2|=3$. In this case $|R|=18$ and $|U(R)|=4$. It follows that $\Gamma(R,S)$ has at least $36$ edges. Thus,  by second part of Corollary \ref{c2}, $\widetilde{\gamma}(\Gamma(R,S))\geq 2$.


{\bf Case 2}: $|R_1|=3$ and $|R_2| =4$. In this case $|R|=24$ and $|U(R)|\geq 4$.  Hence, $\Gamma(R,S)$ has at least $48$ edges and so by second part of Corollary \ref{c2}, $\widetilde{\gamma}(\Gamma(R,S))\geq 2$.

 {\bf Case 3}: $|R_1|= |R_2| =4$. In this case $|R|=32$ and $|U(R)|\geq 4$.  Thus, $\Gamma(R,S)$ has at least $64$ edges and so by second part of Corollary \ref{c2}, $\widetilde{\gamma}(\Gamma(R,S))\geq 2$.
 \end{proof}
 \begin{theorem}\label{t5}
Let $R$ be a non-local finite ring. Then $\Gamma(R, S)$ is projective if and only if $R \cong \Z_3 \times \Z_3$ with $S=\{(-1, -1)\}$, $S=\{(1, -1), (-1, -1)\}$ or $S=\{(-1, 1), (-1, -1)\}$.
\end{theorem}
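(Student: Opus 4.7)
The plan is to combine the bounds of Lemma~\ref{l6} with the structure of Theorem~\ref{t2} to reduce to a short list of candidate rings, eliminate most via Lemmas~\ref{l8}--\ref{l11}, and settle the residual families by direct inspection.

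For the forward direction, assume $R$ is a finite non-local ring with $\widetilde{\gamma}(\Gamma(R,S))=1$. Lemma~\ref{l6} gives $|U(R)|\leq 6$ and $|J(R)|\leq 3$, while Theorem~\ref{t2} forces $R\cong(\Z_2)^\ell\times T$ with $\ell\in\{0,1\}$ and $|T|\leq 16$. Decomposing $R=R_1\times\cdots\times R_n$ ($n\geq 2$) into local factors and applying the multiplicativity $|U(R)|=\prod|U(R_i)|$ and $|J(R)|=\prod|J(R_i)|$, a short enumeration of local rings of order at most $16$ with $\prod|U(R_i)|\leq 6$ yields a finite list. Lemma~\ref{l8} removes every candidate of the form $\Z_2\times T$ with $T\in\{\Z_5,\Z_7,\Z_9,\Z_3[x]/(x^2)\}$; Lemma~\ref{l9} removes every $R_1\times R_2$ with $R_1\in\{\Z_3,\Z_4,\Z_2[x]/(x^2),\F_4\}$ and $R_2\in\{\Z_4,\Z_2[x]/(x^2)\}$; Lemma~\ref{l10} removes $\Z_3\times\F_4$; and Lemma~\ref{l11} removes $\Z_2\times R_1\times R_2$ with each $|R_i|\in\{3,4\}$. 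The surviving candidates are $\Z_2\times\F_4$ and $\Z_3\times\Z_3$.

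For $\Z_2\times\F_4$ the inverse-closed subsets are $\{(1,1)\}$ (planar by Lemma~\ref{t3}(c)), $\{(1,\alpha),(1,\alpha^2)\}$, and $\{1\}\times\F_4^*$. The latter two give, via the tensor description of Lemma~\ref{l5}(b)(i), the graph $K_{4,4}$ minus one edge; the bipartite part of Corollary~\ref{c2} (or Lemma~\ref{l1}(d)) then yields $\widetilde{\gamma}\geq 2$. For $\Z_3\times\Z_3$, every element of $U(R)$ is self-inverse, so there are $15$ admissible $S$. Three are planar by Lemma~\ref{t3}(e). Using the monotonicity of Lemma~\ref{l5}(b)(ii), it suffices to analyse the minimal non-planar subsets: up to the coordinate-swap symmetry these reduce to $\{(-1,-1)\}$, $\{(1,1),(1,-1)\}$, $\{(1,-1),(-1,1)\}$, and $\{(1,1),(-1,-1)\}$. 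The singleton $\{(-1,-1)\}$ gives $K_3\otimes K_3$, which I would show is projective by an explicit embedding; for the other three minimal non-planar subsets, I would combine a direct edge count via Corollary~\ref{c2} (using connectedness from the tensor decomposition) with identification of a $K_{4,4}$-subdivision or another Glover--Huneke--Wang obstruction, forcing $\widetilde{\gamma}\geq 2$. Propagating via monotonicity then rules out all nine non-projective $S$'s and leaves exactly the three listed in the theorem.

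For the reverse direction, for each of the three claimed $S$'s I would (i) note non-planarity, since $\{(-1,-1)\}\subseteq S$ and $\{(-1,-1)\}$ is absent from the planar list in Lemma~\ref{t3}(e), and (ii) exhibit an explicit embedding of $\Gamma(\Z_3\times\Z_3,S)$ in the projective plane, drawn as a $3\times 3$ vertex array with one cross-cap identification, yielding $\widetilde{\gamma}\leq 1$ and hence equality. The main obstacle is the fine discrimination inside the $\Z_3\times\Z_3$ family: the boundary between $\widetilde{\gamma}=1$ and $\widetilde{\gamma}\geq 2$ is crossed by adjoining a single generator to $S=\{(-1,-1)\}$, and only the two additions $(1,-1)$ and $(-1,1)$ preserve projectivity while $(1,1)$ does not. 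The hardest sub-step will be producing the forbidden-subgraph certificate inside $\Gamma(\Z_3\times\Z_3,\{(1,1),(-1,-1)\})$ and drafting the projective embeddings for the two non-singleton projective $S$'s, since those graphs are dense enough for ad hoc drawings to be error-prone.
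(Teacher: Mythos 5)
Your skeleton is the same as the paper's: Lemma \ref{l6} plus Theorem \ref{t2} reduce to $(\Z_2)^\ell\times T$ with $\ell\le 1$, Lemmas \ref{t3}, \ref{l8}, \ref{l9}, \ref{l10}, \ref{l11} leave only $\Z_2\times\F_4$ and $\Z_3\times\Z_3$, and the rest is a case analysis over the admissible $S$ with explicit projective embeddings for the three surviving sets. Several of your alternative certificates do check out. For $\Z_2\times\F_4$ with $|S^\prime|\ge 2$ the graph is indeed $K_{4,4}$ minus one edge, and since it is bipartite with $15$ edges on $8$ vertices, Corollary \ref{c2} gives $\widetilde{\gamma}\ge \frac{15}{2}-8+2>1$; the paper instead identifies this graph with the irreducible graph $E_{18}$. (Your parenthetical ``or Lemma \ref{l1}(d)'' is not available, though, since $K_{4,4}-e$ contains no $K_{4,4}$.) Similarly, for the diagonal pairs $\{(1,1),(-1,-1)\}$ and $\{(1,-1),(-1,1)\}$ an edge count does suffice even though these graphs have triangles ($26$ resp.\ $28$ edges on $9$ vertices, so $\widetilde{\gamma}\ge \frac{q}{3}-9+2\ge 2$), where the paper simply exhibits a $K_{4,4}$ subgraph.

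The genuine gap is the case $S=\{(1,1),(1,-1)\}$ (and its swap $\{(1,1),(-1,1)\}$), which your plan does not actually dispose of. That graph has triangles and only $22$ edges on $9$ vertices, so Corollary \ref{c2} yields only $\widetilde{\gamma}\ge 1$; moreover it contains no $K_{4,4}$ subgraph at all (the three degree-$4$ vertices $(0,0),(1,0),(2,0)$ are pairwise nonadjacent with pairwise incompatible neighbourhoods, and no four vertices share a common neighbourhood of size four), and the existence of a $K_{4,4}$-subdivision is far from evident. This is exactly the point where the paper has to produce a specific certificate: an explicit subgraph isomorphic to the irreducible graph $B_3$ from the Glover--Huneke--Wang list (Figure \ref{f4}), which is then transported to $\{(1,1),(-1,1)\}$ by the coordinate-swap/tensor-commutativity argument. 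Your fallback ``another Glover--Huneke--Wang obstruction'' names the missing content without supplying it, and you in fact point to $\{(1,1),(-1,-1)\}$ as the hardest sub-step, whereas that case is immediate. The remaining items you defer --- the two projective embeddings for $\{(1,-1),(-1,-1)\}$ and (via the swap isomorphism) $\{(-1,1),(-1,-1)\}$, plus the one for $\{(-1,-1)\}$ --- are exactly the paper's Figures \ref{f3} and \ref{f5} and must still be drawn for the reverse direction to be complete.
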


\begin{proof}
Suppose that $\widetilde{\gamma}(\Gamma(R,S))=1$. Then by Lemma \ref{l6}, $|U(R)| \leq 6$ and $|J(R)| \leq 3$. On the other hand,  by Theorem \ref{t2}, $R \cong (\Z_2)^\ell \times T$,
where $0 \leq \ell \leq 1$ and $T$ is a ring with $|T| \leq 16$. Since $T$ is a finite ring, it is a finite direct product of finite local rings.
Now, by Lemmas \ref{t3}, \ref{l8},  \ref{l9},  \ref{l10} and  \ref{l11}, either
$R \cong \Z_2 \times \F_4$ with $S\neq \{(1, 1)\}$ or $R \cong \Z_3 \times \Z_3$ with $S\neq \{(1,1)\}$, $S\neq\{(1,-1)\}$ and $S \neq \{(-1,1)\}$.

{\bf Case 1}: $R \cong \Z_2 \times \F_4$  with $S\neq \{(1, 1)\}$. In this case $S=\{1\} \times S^\prime$, where $|S^\prime| \geq 2$.
By Lemma \ref{l5}(b)(i),
\begin{align*}
\Gamma(R, S) &\cong \overline{\Gamma}(\Z_2,\{1\}) \otimes \overline{\Gamma}(\F_4, S^\prime)\\
&\cong K_2 \otimes \overline{\Gamma}(\F_4, S^\prime).
\end{align*}
On the other hand, since $\F_4$ is a field and $|S^\prime| \geq 2$, in view of the proof of Lemma \ref{t0.1},
all vertices in $\overline{\Gamma}(\F_4, S^\prime)$ are adjacent with the exception that 0 is not adjacent to 0.  Hence, $\Gamma(R, S)$, as shown in Figure \ref{f2},  is isomorphic to the graph $E_{18}$ which is one of the 103 graphs listed in \cite{103+graphs}. Thus, in this case  $\Gamma(R, S)$ is not projcetive.
\begin{figure}[ht]
\begin{tikzpicture}[scale=1]
\coordinate (a0) at (0,0);
\fill (a0) circle (2.5pt);
\coordinate (a1) at (1,0);
\fill (a1) circle (2.5pt);
\coordinate (a2) at (2,1);
\fill (a2) circle (2.5pt);
\coordinate (a3) at (0,1);
\fill (a3) circle (2.5pt);
\coordinate (a4) at (-2,1);
\fill (a4) circle (2.5pt);
\coordinate (a5) at (-2,-1);
\fill (a5) circle (2.5pt);
\coordinate (a6) at (0,-1);
\fill (a6) circle (2.5pt);
\coordinate (a7) at (2,-1);
\fill (a7) circle (2.5pt);

\draw[color=black] (-.5, -.2) node {$(0, \alpha)$};
\node [right] at (a1) {$(1, 0)$};
\draw[color=black] (2.7,0.9) node {$(1, \alpha^2)$};
\node [above right] at (a3) {$(0, 1)$};
\draw[color=black] (-2.7,0.9) node {$(1, 1)$};
\draw[color=black] (-2.7, -.9) node {$(0, 0)$};
\node [below right] at (a6) {$(1, \alpha)$};
\draw[color=black] (2.7, -.9) node {$(0, \alpha^2)$};

\draw [line width=.8pt] (-2,-1) rectangle (2,1);
\draw [line width=.8pt] (0,0) -- (2,1);
\draw [line width=.8pt] (-2,1) --(0,0);
\draw [line width=.8pt] (0,-1) --(0,0);
\draw [line width=.8pt] (0,0) --(1,0);
\draw [line width=.8pt] (1,0) --(0,1);
\draw [line width=.8pt] (1,0) --(2,-1);
\draw [line width=.8pt] (0,1) -- (0,1.8);
\draw [line width=.8pt] (2,1) --(3,1.5);
\draw [line width=.8pt] (-2,1) --(-3,1.5);
\draw [line width=.8pt] (-2,-1) --(-3,-1.5);
\draw [line width=.8pt] (0,-1) --(0,-1.8);
\draw [line width=.8pt] (2,-1) --(3,-1.5);

\end{tikzpicture}
\caption {The graph $\Gamma(\Z_2 \times \F_4, S)$  with $S\neq \{(1, 1)\}$.}
\label{f2}
\end{figure}
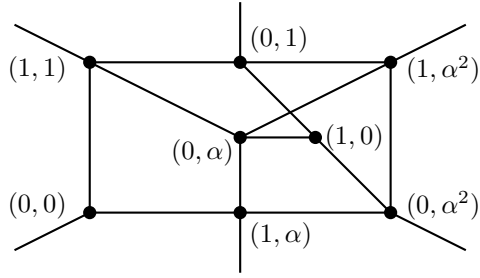

{\bf Case 2}: $R \cong \Z_3 \times \Z_3$ with $S\neq \{(1,1)\}$, $S\neq\{(1,-1)\}$ and $S \neq \{(-1,1)\}$.
In this case by Lemma  \ref{t3}, $\Gamma(R, S)$ is not planar and so $\widetilde{\gamma}(\Gamma(R, S)) \geq 1$. If $S=\{(-1, -1)\}$,  then Figure
 \ref{f3}  shows that  $\Gamma(R, S)$ can be embedded in the projective plane. Thus, $\Gamma(\Z_3 \times \Z_3, \{(-1, -1)\})$ is projective.

\begin{figure}[ht]
\begin{tikzpicture}[scale=1]
\def\r{2.9};
\coordinate (a1) at (0,0);
\fill (a1) circle (2.5pt);
\coordinate (a2) at (1.2,0);
\fill (a2) circle (2.5pt);
\coordinate (a3) at (1.7,-1);
\fill (a3) circle (2.5pt);
\coordinate (a4) at (0,-1);
\fill (a4) circle (2.5pt);
\coordinate (a5) at (-1.5,0);
\fill (a5) circle (2.5pt);
\coordinate (a6) at ({\r*sqrt(2)/2},{\r*sqrt(2)/2});
\fill (a6) circle (2.5pt);
\coordinate (a7) at (\r,0);
\fill (a7) circle (2.5pt);
\coordinate (a8) at ({\r*sqrt(2)/2},-{\r*sqrt(2)/2});
\fill (a8) circle (2.5pt);
\coordinate (a9) at (0,\r);
\fill (a9) circle (2.5pt);
\coordinate (a10) at (-{\r*sqrt(2)/2}, {\r*sqrt(2)/2});\fill (a10) circle (2.5pt);
\coordinate (a11) at (-\r,0);
\fill (a11) circle (2.5pt);
\coordinate (a12) at (-{\r*sqrt(2)/2},- {\r*sqrt(2)/2});
\fill (a12) circle (2.5pt);
\coordinate (a13) at (0,-\r);
\fill (a13) circle (2.5pt);

\draw [line width=.8pt, dashed](0,0) circle (\r cm);
\draw[color=black] (.86, -.3) node {$(0, 0)$};
\node [above] at (a9) {$(2, 2)$};
\node [below] at (a13) {$(2, 2)$};
\draw[color=black] (-.45, .3) node {$(1, 1)$};
\node [right] at (a7) {$(0, 1)$};
\node [above right] at (a6) {$(1, 2)$};
\node [below left] at (a12) {$(1, 2)$};
\node [below right] at (a8) {$(1, 0)$};
\node [above left] at (a10) {$(1, 0)$};
\node [below left] at (a4) {$(0,  2)$};
\node [right] at (a3) {$(2, 1)$};
\node [above] at (a5) {$(2, 0)$};
\node [left] at (a11) {$(0, 1)$};

\draw [line width=.8pt] (a1) -- (a2);
\draw [line width=.8pt] (a3) -- (a2);
\draw [line width=.8pt] (a3) -- (a4);
\draw [line width=.8pt] (a1) -- (a4);
\draw [line width=.8pt] (a1) -- (a9);
\draw [line width=.8pt] (a2) --(a6);
\draw [line width=.8pt] (a2) --(a9);
\draw [line width=.8pt] (a1) --(a5);
\draw [line width=.8pt] (a5) --(a4);
\draw [line width=.8pt] (a5) --(a11);
\draw [line width=.8pt] (a5) -- (a12);
\draw [line width=.8pt] (a4) --(a8);
\draw [line width=.8pt] (a3) --(a8);
\draw [line width=.8pt] (a3) --(a6);
\draw [line width=.8pt] (a11) --(a10);
\draw [line width=.8pt] (a11) --(a12);
\draw [line width=.8pt] (a11) --(a9);
\draw [line width=.8pt] (a10) --(a9);

\end{tikzpicture}
\caption {Embedding  of $\Gamma(\Z_3 \times \Z_3, \{(-1, -1)\})$  in the projective plane.}
\label{f3}
\end{figure}
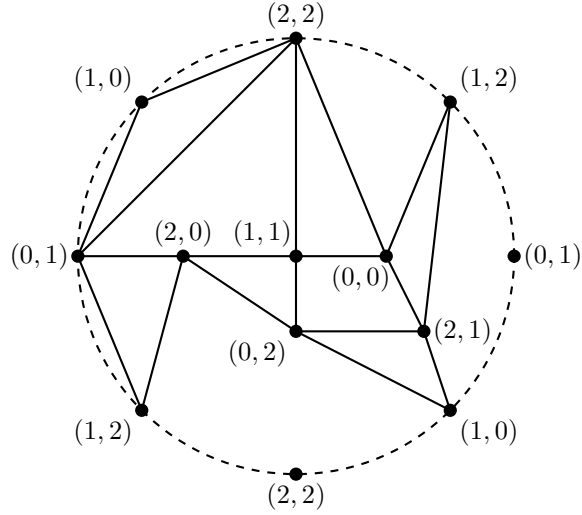
If either $S=\{(1, 1), (-1, -1)\}$ or $S=\{(1, -1), (-1, 1)\}$, then every vertex in $\{(0,0), (0, 1), (0, 2), (1, 0)\}$ is adjacent to every vertex in
$\{(1, 1), (1, 2), (2, 1), (2, 2)\}$.  Hence, $K_{4,4}$ is a subgraph of $\Gamma(R, S)$ and so by parts (b) and (d) of  Lemma \ref{l1},
$\widetilde{\gamma}(\Gamma(R, S)) \geq 2$. If $|S| \geq 3$, then either $\{(1, 1), (-1, -1)\}$ or $\{(1, -1), (-1, 1)\}$ is a subset of $S$
and so by Lemma \ref{l5}(b)(ii) and Lemma \ref{l1}(b), $\widetilde{\gamma}(\Gamma(R, S)) \geq 2$.

If $S=\{(1, 1), (1, -1)\}$, then Figure \ref{f4}  shows that $\Gamma(R, S)$ contains a subgraph isomorphic to $B_3$ which is one of the 103 graphs listed in \cite{103+graphs}. Hence, by Lemma \ref{l1}(b),
$\Gamma(\Z_3 \times \Z_3, \{(1, 1), (1, -1)\})$ is not projective.

\begin{figure}[ht]
\begin{tikzpicture}[scale=1]
\coordinate (a0) at (-1,0);
\fill (a0) circle (2.5pt);
\coordinate (a7) at (1,0);
\fill (a7) circle (2.5pt);
\coordinate (a2) at (0,1);
\fill (a2) circle (2.5pt);
\coordinate (a5) at (0,-1);
\fill (a5) circle (2.5pt);
\coordinate (a1) at (2,1);
\fill (a1) circle (2.5pt);
\coordinate (a3) at (-2,1);
\fill (a3) circle (2.5pt);
\coordinate (a4) at (2,-1);
\fill (a4) circle (2.5pt);
\coordinate (a6) at (-2,-1);
\fill (a6) circle (2.5pt);

\draw[color=black] (-.4,0) node {$(1, 2)$};
\draw[color=black] (1.6,0) node {$(2, 2)$};
\draw[color=black] (-1.9,1.3) node {$(1, 0)$};
\draw[color=black] (-1.9,-1.3) node {$(1, 1)$};
\draw[color=black] (1.9,1.3) node {$(2, 0)$};
\draw[color=black] (1.9,-1.3) node {$(2, 1)$};
\draw[color=black] (0,1.6) node {$(0, 2)$};
\draw[color=black] (-.1,-1.6) node {$(0, 1)$};

\draw [line width=.8pt] (a2)  to [out=60, in=135] (2.4,1.4) to [out=-45, in=30] (a4);
\draw [line width=.8pt] (a2)  to [out=120, in=45] (-2.4,1.4) to [out=225, in=150] (a6);
\draw [line width=.8pt] (a1)  to [out=-30, in=45] (2.4,-1.4) to [out=225, in=-60] (a5);
\draw [line width=.8pt] (a3)  to [out=210, in=135] (-2.4,-1.4) to [out=-45, in=225] (a5);

\draw [line width=.8pt] (-2,-1) rectangle (2,1);
\draw [line width=.8pt] (a6) -- (a0);
\draw [line width=.8pt] (a3) --(a0);
\draw [line width=.8pt] (a0) --(a2);
\draw [line width=.8pt] (a0) --(a5);
\draw [line width=.8pt] (a5) --(a7);
\draw [line width=.8pt] (a7) --(a2);
\draw [line width=.8pt] (a7) --(a1);
\draw [line width=.8pt] (a7) --(a4);

\end{tikzpicture}
\caption {A sugraph of $\Gamma(\Z_3 \times \Z_3, \{(1, 1), (1, -1)\})$.}
\label{f4}
\end{figure}

If $S=\{(1, 1), (-1, 1)\}$,
then by  Lemma \ref{l5}(b)(i),
\begin{align*}
\Gamma(R, S)& =\Gamma(\Z_3 \times \Z_3, \{(1, -1\} \times \{ 1\})\\
 &\cong \overline{\Gamma} (\Z_3, \{1, -1\}) \otimes \overline{\Gamma}(\Z_3, \{1\}) \\
 &\cong \overline{\Gamma} (\Z_3, \{1\}) \otimes \overline{\Gamma}(\Z_3, \{1, -1\}) \\
&\cong  \Gamma(\Z_3 \times \Z_3, \{1\} \times \{1, -1\})\\
&= \Gamma(\Z_3 \times \Z_3, \{(1,1), (1, -1)\}).
\end{align*}
This implies that $\widetilde{\gamma}(\Gamma(\Z_3 \times \Z_3, \{(1, 1), (-1, 1)\})) \geq 2$ .

If $S=\{(1, -1), (-1, -1)\}$, then by Figure \ref{f5}, $\Gamma(R, S)$ can be embedded in the projective plane. Hence,
$\Gamma(\Z_3 \times \Z_3, \{(1, -1), (-1, -1)\})$ is projective.

Finally, if  $S=\{(-1, 1), (-1, -1)\}$, then similarly by  Lemma \ref{l5}(b)(i),
$$\Gamma(R, S)  \cong \Gamma(\Z_3 \times \Z_3, \{(1, -1), (-1, -1)\}).$$ It follows that $\Gamma(\Z_3 \times \Z_3, \{(-1, 1), (-1, -1)\})$  is also projective.

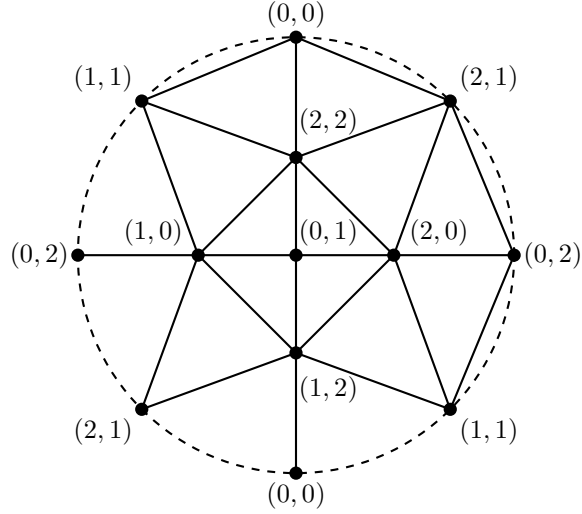
\begin{figure}[ht]
	\begin{tikzpicture}[scale=1]
	\def\r{2.9};
	\def \a{1.3};
	\coordinate (a1) at (0,0);
	\fill (a1) circle (2.5pt);
	\coordinate (a2) at (\a,0);
	\fill (a2) circle (2.5pt);
	\coordinate (a3) at (0,-\a);
	\fill (a3) circle (2.5pt);
	\coordinate (a4) at (0,\a);
	\fill (a4) circle (2.5pt);
	\coordinate (a5) at (-\a,0);
	\fill (a5) circle (2.5pt);
	\coordinate (a6) at ({\r*sqrt(2)/2},{\r*sqrt(2)/2});
	\fill (a6) circle (2.5pt);
	\coordinate (a7) at (\r,0);
	\fill (a7) circle (2.5pt);
	\coordinate (a8) at ({\r*sqrt(2)/2},-{\r*sqrt(2)/2});
	\fill (a8) circle (2.5pt);
	\coordinate (a9) at (0,\r);
	\fill (a9) circle (2.5pt);
	\coordinate (a10) at (-{\r*sqrt(2)/2}, {\r*sqrt(2)/2});\fill (a10) circle (2.5pt);
	\coordinate (a11) at (-\r,0);
	\fill (a11) circle (2.5pt);
	\coordinate (a12) at (-{\r*sqrt(2)/2},- {\r*sqrt(2)/2});
	\fill (a12) circle (2.5pt);
	\coordinate (a13) at (0,-\r);
	\fill (a13) circle (2.5pt);
	
	\draw [line width=.8pt, dashed](0,0) circle (\r cm);
	\draw[color=black] (.43, .3) node {$(0, 1)$};
	\draw[color=black] ({\a+.6}, .3) node {$(2, 0)$};
	\draw[color=black] ({-\a-.6}, .3) node {$(1, 0)$};
	\draw[color=black] (.43,{\a+.48}) node {$(2, 2)$};
	\draw[color=black] (.43,{-\a-.48}) node {$(1, 2)$};
	\node [above] at (a9) {$(0, 0)$};\node [below] at (a13) {$(0, 0)$};
	\node [right] at (a7) {$(0, 2)$};\node [left] at (a11) {$(0, 2)$};
	\node [above right] at (a6) {$(2, 1)$};\node [below left] at (a12) {$(2, 1)$};
	\node [above left] at (a10) {$(1, 1)$};\node [below right] at (a8) {$(1, 1)$};

	\draw [line width=.8pt] (a1) -- (a2);
	\draw [line width=.8pt] (a3) -- (a2);
	\draw [line width=.8pt] (a4) -- (a2);
	\draw [line width=.8pt] (a1) -- (a5);
	\draw [line width=.8pt] (a1) -- (a3);
	\draw [line width=.8pt] (a3) -- (a5);
	\draw [line width=.8pt] (a1) -- (a4);
	\draw [line width=.8pt] (a4) -- (a5);
	\draw [line width=.8pt] (a4) --(a6);
	\draw [line width=.8pt] (a4) --(a10);
	\draw [line width=.8pt] (a5) --(a10);
	\draw [line width=.8pt] (a5) --(a11);
	\draw [line width=.8pt] (a5) --(a12);
	\draw [line width=.8pt] (a3) -- (a12);
	\draw [line width=.8pt] (a4) --(a9);
	\draw [line width=.8pt] (a3) --(a8);
	\draw [line width=.8pt] (a3) --(a13);
	\draw [line width=.8pt] (a2) --(a8);
	\draw [line width=.8pt] (a2) --(a7);
	\draw [line width=.8pt] (a2) --(a6);
	\draw [line width=.8pt] (a9) --(a6);
	\draw [line width=.8pt] (a9) --(a10);
	\draw [line width=.8pt] (a6) --(a7);
	\draw [line width=.8pt] (a7) --(a8);
	
	\end{tikzpicture}
	\caption {Embedding  of $\Gamma(\Z_3 \times \Z_3, \{(1, -1), (-1, -1)\})$  in the projective plane.}
	\label{f5}
\end{figure}

\end{proof}

\begin{corollary} \label{c5}
Let $R$ be a finite ring. Then $\Gamma(R, S)$ is projective if and only if one of the following conditions holds:
\begin{itemize}
          \item[(a)] $R\cong \Z_5$ with $S\neq \{1\}$.
          \item[(b)] $R \cong \Z_3 \times \Z_3$ with $S=\{(-1, -1)\}$, $S=\{(1, -1), (-1, -1)\}$ or $S=\{(-1, 1), (-1, -1)\}$.
\end{itemize}
\end{corollary}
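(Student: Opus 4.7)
The plan is to deduce the corollary directly from Theorem \ref{t4} and Theorem \ref{t5}, which together cover the two mutually exclusive cases that a finite ring can exhibit: being local, or being a nontrivial product of local rings.

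First I would handle the forward direction. Assume $\widetilde{\gamma}(\Gamma(R,S))=1$. Since $R$ is a finite (hence Artinian) ring, by the structure theorem for Artinian rings we can write
\[
R \cong R_1 \times \dots \times R_n,
\]
where each $R_i$ is a finite local ring. Splitting on $n$, if $n=1$ then $R$ is itself a finite local ring and Theorem \ref{t4} applies, forcing $R \cong \Z_5$ with $S \neq \{1\}$, which is condition (a). If $n \geq 2$, then $R$ is non-local and Theorem \ref{t5} applies, forcing $R \cong \Z_3 \times \Z_3$ with $S$ one of the three specified subsets, which is condition (b).

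For the reverse direction, I would simply invoke the constructive parts of the two theorems. In case (a), Lemma \ref{t0.1} gives $\Gamma(\Z_5, S) \cong K_5$ when $S \neq \{1\}$, and Lemma \ref{l1}(c) yields $\widetilde{\gamma}(K_5) = 1$. In case (b), the explicit embeddings exhibited in Figure \ref{f3} (for $S=\{(-1,-1)\}$) and Figure \ref{f5} (for $S=\{(1,-1),(-1,-1)\}$) show that $\Gamma(R,S)$ embeds in the projective plane; together with the nonplanarity coming from Lemma \ref{t3}, this gives $\widetilde{\gamma}(\Gamma(R,S))=1$. The remaining case $S=\{(-1,1),(-1,-1)\}$ follows from the isomorphism
\[
\Gamma(\Z_3\times\Z_3,\{(-1,1),(-1,-1)\}) \cong \Gamma(\Z_3\times\Z_3,\{(1,-1),(-1,-1)\})
\]
obtained by swapping coordinates, as used in the proof of Theorem \ref{t5}.

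There is no real obstacle here since the corollary is essentially a repackaging: the heavy lifting has already been carried out in the two preceding theorems. The only thing to be slightly careful about is the case split itself, so that the two theorems jointly cover all finite rings without overlap, which is guaranteed by the Artinian decomposition above.
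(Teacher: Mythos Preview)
Your proposal is correct and follows the same approach as the paper: the paper's proof is simply the one-line statement ``It is an immediate consequence from Theorems \ref{t4} and \ref{t5},'' and you have spelled out exactly how that deduction goes. Your treatment is in fact more detailed than the paper's, but the underlying argument is identical.
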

\begin{proof} It is an immediate consequence from Theorems \ref{t4} and \ref{t5}.
\end{proof}

\begin{corollary} There is no finite ring $R$ such that the unit graph $G(R)$ is projective.
\end{corollary}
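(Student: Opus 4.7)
The plan is to derive this corollary directly from Corollary \ref{c5}. The unit graph $G(R)$ is, by its definition in the introduction, exactly the graph whose two vertices $x,y$ are joined iff $x+y\in U(R)$; in the notation of $\Gamma(R,G,S)$ this is the case $G=U(R)$ and $S=\{1\}$, i.e.\ $G(R)=\Gamma(R,\{1\})$. So it suffices to rule out $S=\{1\}$ in the classification of projective $\Gamma(R,S)$.

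I would simply inspect each of the two families in Corollary \ref{c5}. In case (a), $R\cong\Z_5$, the hypothesis for projectivity is $S\neq\{1\}$, which by definition excludes the unit graph. In case (b), $R\cong\Z_3\times\Z_3$, each of the three admissible sets $\{(-1,-1)\}$, $\{(1,-1),(-1,-1)\}$, $\{(-1,1),(-1,-1)\}$ contains the element $(-1,-1)=(2,2)\neq(1,1)$, so none of them equals $\{(1,1)\}=\{1_R\}$. Hence the unit-graph case $S=\{1\}$ never occurs among the projective examples.

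Since these are the only possibilities produced by Corollary \ref{c5}, we conclude that for no finite ring $R$ is $G(R)$ projective. There is essentially no obstacle; the only thing to be careful about is to note explicitly the translation $G(R)=\Gamma(R,\{1\})$ so that Corollary \ref{c5} applies, and to verify in case (b) that $(1,1)$ is not among the allowed $S$.
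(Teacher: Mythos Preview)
Your argument is correct and is essentially the same as the paper's: you identify $G(R)=\Gamma(R,\{1\})$ and then observe that $S=\{1\}$ is excluded in every case of Corollary~\ref{c5}. The only difference is that you spell out the case-check for $\Z_3\times\Z_3$ explicitly, whereas the paper leaves this verification to the reader.
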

\begin{proof} It follows from the fact that $G(R)=\Gamma(R, \{1\})$ and Corollary \ref{c5}.
\end{proof}

\begin{corollary} Let  $R$ be a finite ring. Then the unitary Cayley graph $\mathrm{Cay}(R, U(R))$ is projetcive if and only if $R$ is isomorphic to $\Z_5$ or $\Z_3 \times \Z_3$.
\end{corollary}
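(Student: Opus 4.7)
The plan is to recognize the unitary Cayley graph as a specific instance of the generalized graph $\Gamma(R,S)$ and then appeal directly to Corollary \ref{c5}. The first step is to observe that two distinct vertices $x,y$ are adjacent in $\mathrm{Cay}(R,U(R))$ exactly when $x-y\in U(R)$, which can be rewritten as $x+(-1)y\in U(R)$. Since $(-1)^{-1}=-1$, the singleton $S=\{-1\}$ satisfies $S^{-1}\subseteq S$ and is an admissible choice in the definition of $\Gamma(R,S)$. Thus $\mathrm{Cay}(R,U(R))=\Gamma(R,\{-1\})$, and the theorem becomes a question about which finite rings $R$ admit a projective $\Gamma(R,\{-1\})$.

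With this identification, I would scan the conclusion of Corollary \ref{c5} and single out those cases whose set $S$ equals $\{-1\}$. In case (a), $R\cong\Z_5$ with $S\neq\{1\}$: since $-1=4\neq 1$ in $\Z_5$, the choice $S=\{-1\}$ is admissible, and so $\mathrm{Cay}(\Z_5,U(\Z_5))$ is projective. In case (b), $R\cong\Z_3\times\Z_3$: among the three projective choices of $S$ listed there, exactly one is a singleton, namely $S=\{(-1,-1)\}$, which is precisely $\{-1\}$ in $\Z_3\times\Z_3$; this realizes the unitary Cayley graph and confirms projectivity. These two calculations give the ``if'' direction.

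For the converse, suppose $\mathrm{Cay}(R,U(R))=\Gamma(R,\{-1\})$ is projective. Then by Corollary \ref{c5} the ring $R$ is isomorphic to either $\Z_5$ or $\Z_3\times\Z_3$, and no further compatibility constraint needs to be checked: in $\Z_5$ the singleton $\{-1\}$ lies in the allowed family $S\neq\{1\}$, and in $\Z_3\times\Z_3$ the singleton $\{(-1,-1)\}$ is explicitly listed among the projective options. There is essentially no obstacle; all the substantive work has been done in proving Corollary \ref{c5}, and the present corollary is just its specialization to the single set $S=\{-1\}$.
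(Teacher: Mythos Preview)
Your proof is correct and follows essentially the same approach as the paper: identify $\mathrm{Cay}(R,U(R))$ with $\Gamma(R,\{-1\})$ and read off the answer from Corollary \ref{c5}. The paper's own proof is simply a one-line reference to that identification and corollary, while you have spelled out the details more fully.
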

\begin{proof}
Since $\mathrm{Cay}(R, U(R))=\Gamma(R, \{-1\})$, by Corollary \ref{c5}, there in nothing to prove.
\end{proof}
Some properties of the special case of the graph $\Gamma(R, S)$ in the case that $S=U(R)$ were studied by Naghipour et al. in \cite{Naghipour+unit}.  As a consequence of Corollary \ref{c5}, we have the following corollary.
\begin{corollary}  Let  $R$ be a finite ring. Then $\Gamma(R, U(R))$ is projetcive if and only if $R$ is isomorphic to $\Z_5$.
\end{corollary}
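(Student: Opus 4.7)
The plan is to deduce this corollary directly from Corollary \ref{c5}, by checking in each of the two cases listed there whether the admissible subset $S$ can equal $U(R)$.

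First, I would handle case (a) of Corollary \ref{c5}: if $R \cong \Z_5$, then $U(R) = \{1,2,3,4\}$, which has four elements and in particular is different from $\{1\}$. So the condition ``$S \neq \{1\}$'' is satisfied when $S = U(R)$, and hence $\Gamma(\Z_5, U(\Z_5))$ is indeed projective. This gives the ``if'' direction.

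Next I would rule out case (b). Here $R \cong \Z_3 \times \Z_3$, and a direct computation gives
\[
U(R) = U(\Z_3) \times U(\Z_3) = \{(1,1),(1,-1),(-1,1),(-1,-1)\},
\]
a set of cardinality four. The three admissible subsets listed in Corollary \ref{c5}(b) are $\{(-1,-1)\}$, $\{(1,-1),(-1,-1)\}$ and $\{(-1,1),(-1,-1)\}$, each of cardinality at most two, so none of them equals $U(R)$. Therefore $\Gamma(\Z_3 \times \Z_3, U(\Z_3 \times \Z_3))$ is not projective.

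Combining these two observations with the ``only if'' direction of Corollary \ref{c5}, we conclude that among finite rings $R$, the graph $\Gamma(R, U(R))$ is projective precisely when $R \cong \Z_5$. There is no real obstacle here; the argument is just a short case analysis using Corollary \ref{c5}, and the only point to be careful about is the explicit enumeration of $U(\Z_3 \times \Z_3)$ to confirm that it is not among the three exceptional $S$ listed.
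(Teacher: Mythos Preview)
Your proposal is correct and matches the paper's approach: the paper states the result simply ``as a consequence of Corollary \ref{c5}'' without further details, and your argument provides exactly the straightforward case check showing that $S = U(R)$ fits case (a) but not case (b) of that corollary.
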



\section{Projective Co-maximal garphs}
Let $R$ be a ring.  As in \cite{Genus+two+generalized+unit}, we denote the co-maximal graph of $R$ by  $C_\Gamma(R)$. For every $S \subseteq U(R)$ with $S^{-1} \subseteq S$, $\Gamma(R, S)$ is a subgraph of $C_\Gamma(R)$.
The authors in \cite{Genus+comaximal} and \cite{Genus+two+generalized+unit} determined all finite rings whose co-maximal graph has genus at most one and genus two, respectively. The question first posed in \cite{Projective+total}  was ``which rings have projective  co-maximal  graphs?".  In this section, we characterize all Artinian rings $R$ whose co-maximal graphs $C_\Gamma(R)$ are  projective.
\begin{theorem}\label{t6}
 Let $R$ be an Artinian ring such that $\widetilde{\gamma}(C_\Gamma(R))<\infty$. Then $R$ is a finite ring.
\end{theorem}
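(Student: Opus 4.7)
The plan is to reduce Theorem \ref{t6} to the already-proved Theorem \ref{t1} via the subgraph relationship between $\Gamma(R,S)$ and $C_\Gamma(R)$. The hypothesis gives $\widetilde{\gamma}(C_\Gamma(R)) < \infty$, and Theorem \ref{t1} finiteness is stated for $\Gamma(R,S)$, so the natural strategy is to sandwich a convenient $\Gamma(R,S)$ inside $C_\Gamma(R)$ and then invoke Theorem \ref{t1}.

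First, I would choose the simplest legitimate parameter, namely $S = \{1\} \subseteq U(R)$, which trivially satisfies $S^{-1} = S$. The graph $\Gamma(R, \{1\})$ has $xy \in E$ iff $x + y \in U(R)$, and this forces $Rx + Ry = R$, so every edge of $\Gamma(R, \{1\})$ is an edge of $C_\Gamma(R)$; this is exactly the containment noted at the start of this section (``For every $S \subseteq U(R)$ with $S^{-1} \subseteq S$, $\Gamma(R, S)$ is a subgraph of $C_\Gamma(R)$'').

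Next, I would apply Lemma \ref{l1}(b) to the subgraph $\Gamma(R, \{1\}) \subseteq C_\Gamma(R)$, which yields
\[
\widetilde{\gamma}(\Gamma(R, \{1\})) \,\leq\, \widetilde{\gamma}(C_\Gamma(R)) \,<\, \infty.
\]
Since $R$ is Artinian, Theorem \ref{t1} applied with the parameter $S = \{1\}$ then delivers the conclusion that $R$ is finite.

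I do not foresee any real obstacle: the entire content of the theorem is packaged in Theorem \ref{t1}, and the only ``new'' observation needed is the trivial subgraph inclusion $\Gamma(R, \{1\}) \subseteq C_\Gamma(R)$ together with monotonicity of $\widetilde{\gamma}$. If, hypothetically, Theorem \ref{t1} were unavailable, one would have to redo its two-case argument ($|J(R)| = 1$ via an infinite bipartite subgraph $K_{k',k'}$ inside a component obtained from an infinite residue field, and $|J(R)| > 1$ via Remark \ref{r2} plus the structure theorem for Artinian rings together with \cite{Ganesan}); but with Theorem \ref{t1} in hand this is unnecessary.
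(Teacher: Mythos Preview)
Your proposal is correct and matches the paper's own proof essentially verbatim: the paper also observes that $\Gamma(R,S)$ is a subgraph of $C_\Gamma(R)$, applies Lemma~\ref{l1}(b) to get $\widetilde{\gamma}(\Gamma(R,S))<\infty$, and then invokes Theorem~\ref{t1}. The only cosmetic difference is that you single out $S=\{1\}$ explicitly, whereas the paper leaves $S$ arbitrary.
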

\begin{proof}
Since $\Gamma(R, S)$ is a subgraph of $C_\Gamma(R)$, by Lemma \ref{l1}(b), $\widetilde{\gamma}(\Gamma(R, S)) \leq \widetilde{\gamma}(C_\Gamma(R))$. Hence, $\widetilde{\gamma}(\Gamma(R, S)< \infty$ and so by Theorem \ref{t1}, $R$ is a finite ring.
\end{proof}
The following corollary is an immediate consequence from Lemma \ref{l1}(a) and Theorem \ref{t6}.
\begin{corollary} Let $R$ be an Artinian ring such that $\gamma(C_\Gamma(R))<\infty$. Then $R$ is a finite ring.
\end{corollary}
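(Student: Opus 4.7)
The plan is to derive this as a short deduction from Theorem \ref{t6} together with Lemma \ref{l1}(a). Concretely, I would like to argue that finiteness of the orientable genus of $C_\Gamma(R)$ forces finiteness of the nonorientable genus, at which point Theorem \ref{t6} finishes the job.

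The first step is to set $g := \gamma(C_\Gamma(R))$ and observe that $g<\infty$ by hypothesis. Lemma \ref{l1}(a) gives the inequality $\widetilde{\gamma}(H) \leq 2\gamma(H)+1$ for every finite graph $H$. Since both $\gamma$ and $\widetilde{\gamma}$ of an infinite graph are defined as the supremum over finite subgraphs (as recalled in the introduction), applying this inequality to each finite subgraph $H$ of $C_\Gamma(R)$ yields
\[
\widetilde{\gamma}(H) \;\leq\; 2\gamma(H)+1 \;\leq\; 2g+1,
\]
and passing to the supremum gives $\widetilde{\gamma}(C_\Gamma(R)) \leq 2g+1 < \infty$.

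The second and final step is to invoke Theorem \ref{t6}: since $R$ is Artinian and $\widetilde{\gamma}(C_\Gamma(R))$ is finite, $R$ must be a finite ring. There is no real obstacle here; the only point that requires a moment of care is making sure the inequality in Lemma \ref{l1}(a), which is stated for finite graphs, transfers cleanly to the possibly infinite graph $C_\Gamma(R)$ via the supremum definition. Once that is acknowledged, the corollary is immediate.
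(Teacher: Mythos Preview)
Your proposal is correct and follows essentially the same route as the paper, which simply records the corollary as an immediate consequence of Lemma~\ref{l1}(a) and Theorem~\ref{t6}. Your extra care in justifying that the inequality $\widetilde{\gamma}(H)\le 2\gamma(H)+1$ passes to the supremum over finite subgraphs is a welcome clarification that the paper leaves implicit.
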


\begin{remark} { \rm Let $R$ be a finite ring such that $\widetilde{\gamma}(C_\Gamma (R))=k>0$. Since $\Gamma(R, S)$ is a subgraph of
$C_\Gamma(R)$, by Lemma \ref{l1}(b) and Corollary \ref{c3}, $|R|\leq 32k$. In particular, for any positive integer $k$, the number of finite rings $R$ such that $\widetilde{\gamma}(\Gamma(R,S))=k$ is finite.  Also, by Lemma \ref{l1}(a), for a given positive integer $g$,  the number of finite rings $R$ such that $\gamma(C_\Gamma(R))=g$ is finite.
}
\end{remark}

\begin{lemma}\label{l12}{\rm (\cite[Corollary 5.3]{Genus+comaximal})}
Let $R$ be a finite ring. Then $C_\Gamma(R)$ is planar if and only if $R$ is isomorphic to one of the following rings:
$$\Z_2, \ \ \Z_3 , \ \  \Z_4 , \ \  \frac{\Z_2[x]}{(x^2)}, \ \ \F_4, \ \  \Z_2  \times \Z_2 , \ \ \Z_2  \times \Z_3 \ \  or \ \ \Z_2  \times \Z_2 \times \Z_2.$$
\end{lemma}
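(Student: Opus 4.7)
The plan is to impose adjacency-based lower bounds on cliques and complete bipartite subgraphs inside $C_\Gamma(R)$ to reduce to a short list of candidate rings, and then to verify each candidate directly via Kuratowski's theorem.

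\textbf{Step 1 (universal vertices).} Every unit $u\in U(R)$ satisfies $Ru+Rx=R$ for each $x\neq u$, so $U(R)$ induces a clique in $C_\Gamma(R)$ and each unit is adjacent to every other vertex. Consequently, if $|U(R)|\geq 5$ then $K_5\subseteq C_\Gamma(R)$ and planarity fails. Since $R$ is finite, $R\cong R_1\times\cdots\times R_n$ with each $R_i$ a finite local ring, and $|U(R)|=\prod_{i=1}^n|U(R_i)|\leq 4$; this already cuts the list of possible factors drastically.

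\textbf{Step 2 (local case).} If $R$ is local with maximal ideal $\fm$, then for distinct $x,y\in\fm$ we have $Rx+Ry\subseteq\fm\neq R$, so $\fm$ is an independent set while every unit is universal. Thus $C_\Gamma(R)$ is the join of $K_{|U(R)|}$ with an independent set of size $|\fm|$. Avoiding $K_{3,3}$ in this join forces $|\fm|\leq 2$ once $|U(R)|\geq 3$, and combined with $|U(R)|\leq 4$ the finite local rings that survive are exactly $\Z_2,\Z_3,\Z_4,\Z_2[x]/(x^2),\F_4$; the case $R\cong\Z_5$ is excluded because $C_\Gamma(\Z_5)\cong K_5$, and larger local rings fall to the same bounds. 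Each survivor admits an immediate planar embedding.

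\textbf{Step 3 (non-local case).} For $n\geq 2$ the bound $\prod|U(R_i)|\leq 4$, together with the list of local survivors from Step 2 for each $R_i$, leaves only finitely many candidates: products among $\Z_2,\Z_3,\Z_4,\Z_2[x]/(x^2),\F_4$, plus a few stragglers like $\Z_2\times\Z_5$ or $\Z_3\times\Z_3$. For every such candidate not appearing in the conclusion I would exhibit an explicit $K_5$ or $K_{3,3}$ subgraph; for example, in $\Z_2\times\Z_2\times\Z_3$ the vertices $(1,1,0),(1,0,1),(0,1,1),(1,1,1),(1,1,2)$ are pairwise comaximal and form $K_5$, while in $\Z_2\times\Z_4$ a $K_{3,3}$ is found by grouping the two units with one of the two-element non-unit classes on one side and the other non-unit classes on the other. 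The eight rings in the stated list are all of order at most $8$, so planar drawings can be produced one by one.

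\textbf{Main obstacle.} The bookkeeping in Step 3 is the principal hurdle: the enumeration of products compatible with $|U(R)|\leq 4$ is finite but long, and for each non-planar product one must construct a Kuratowski witness by hand, since no single structural pattern (like the join structure that handled the local case) disposes of all of them uniformly. Care is required to ensure that no candidate is overlooked and that each claimed planar drawing is genuinely crossing-free.
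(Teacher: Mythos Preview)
The paper does not prove this lemma at all; it is quoted from \cite[Corollary~5.3]{Genus+comaximal} and used as a black box. So there is no ``paper's own proof'' to compare against, but your outline is sound and is essentially the argument one finds in that reference.

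One sharpening would shorten your Step~3 considerably. Since every unit is a universal vertex, four units together with $0$ already span a $K_5$; hence planarity forces $|U(R)|\leq 3$, not merely $|U(R)|\leq 4$. With this bound the ``stragglers'' $\Z_5$, $\Z_3\times\Z_3$, $\Z_2\times\Z_5$, and all order-$8$ local factors disappear at once, and the only non-local candidates left are $\Z_2^{\ell}\times T$ with $T\in\{\Z_2,\Z_3,\Z_4,\Z_2[x]/(x^2),\F_4\}$, which makes the remaining case check short. Your vague description of a $K_{3,3}$ in $\Z_2\times\Z_4$ can be made concrete: take parts $\{(1,1),(1,0),(1,2)\}$ and $\{(1,3),(0,1),(0,3)\}$; all nine cross-edges are present.
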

\begin{theorem}
Let $R$ be a finite ring. Then $C_\Gamma(R)$ is projective if and only if $R$ is isomorphic to one of the following rings:
$$\Z_2 \times \Z_4,  \ \ \ \ \ \Z_2 \times \frac{\Z_2[x]}{(x^2)}  \ \ \ \ \ or \ \ \ \ \ \Z_5.$$
\end{theorem}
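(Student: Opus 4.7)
My strategy combines the subgraph containment $\Gamma(R,S)\subseteq C_\Gamma(R)$ with the earlier classifications of projective $\Gamma(R,S)$ (Corollary~\ref{c5}) and of planar $C_\Gamma(R)$ (Lemma~\ref{l12}). Suppose $\widetilde{\gamma}(C_\Gamma(R))=1$. Then Lemma~\ref{l1}(b) gives $\widetilde{\gamma}(\Gamma(R,U(R)))\le 1$, so $\Gamma(R,U(R))$ is planar or projective; applying Lemma~\ref{t3} with $S=U(R)$ together with Corollary~\ref{c5} confines $R$ to
$$R\in\{(\Z_2)^\ell\times T:\ell\ge 0,\ T\in\{\Z_2,\Z_3,\Z_4,\Z_2[x]/(x^2)\}\}\cup\{\F_4,\Z_5\}.$$
Discarding by Lemma~\ref{l12} those members with a planar co-maximal graph leaves $\Z_5$, $(\Z_2)^n$ for $n\ge 4$, $(\Z_2)^\ell\times\Z_3$ for $\ell\ge 2$, $(\Z_2)^\ell\times\Z_4$ for $\ell\ge 1$, and $(\Z_2)^\ell\times\Z_2[x]/(x^2)$ for $\ell\ge 1$ as the only candidates.

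For sufficiency, $C_\Gamma(\Z_5)\cong K_5$ (in a field every pair of not-both-zero elements is co-maximal), so Lemma~\ref{l1}(c) yields $\widetilde{\gamma}=1$. Since $\Z_4$ and $\Z_2[x]/(x^2)$ are local rings of order four whose co-maximality relations coincide (in each, two elements are co-maximal iff at least one is a unit), the graphs $C_\Gamma(\Z_2\times\Z_4)$ and $C_\Gamma(\Z_2\times\Z_2[x]/(x^2))$ are isomorphic, so a single drawing handles both. I would enumerate the $17$ edges on the $8$ vertices and exhibit a drawing on the projective plane as a disk with antipodal boundary identification, in the style of Figures~\ref{f3} and~\ref{f5}.

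For necessity, I would propagate via an induced subgraph: fixing the first coordinate of $(\Z_2)^\ell\times T$ equal to $1$ recovers an induced copy of $C_\Gamma((\Z_2)^{\ell-1}\times T)$ (the $\Z_2$-factor condition becomes automatic once one coordinate is a unit), so by Lemma~\ref{l1}(b) it suffices to verify the smallest member of each family. For $R=\Z_2\times\Z_2\times T$ with $T\in\{\Z_3,\Z_4,\Z_2[x]/(x^2)\}$, a direct edge count gives $35$, $53$, and $53$ edges on $12$, $16$, and $16$ vertices respectively, and Corollary~\ref{c2} returns $\widetilde{\gamma}(C_\Gamma)\ge 2$. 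The subtle case is $R=(\Z_2)^4$: here $(p,q)=(16,40)$ yields only a trivial bound from Corollary~\ref{c2}. I would first delete the pendant vertex $(0,0,0,0)$ and the four weight-one vertices (each of degree two, forming a triangle with the all-ones vertex and the corresponding weight-three vertex); the resulting connected subgraph has $11$ vertices and $31$ edges, so Corollary~\ref{c2} gives $\widetilde{\gamma}\ge 2$, which lifts back by Lemma~\ref{l1}(b).

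The principal obstacle is producing and rigorously verifying the concrete projective embedding of $C_\Gamma(\Z_2\times\Z_4)$, which amounts to choosing a rotation system whose face-tracing yields $r=10$ faces each of length at least $3$ and total boundary $2q=34$. A secondary obstacle is the pruning trick for $(\Z_2)^4$: without excising low-degree vertices, Corollary~\ref{c2} is too coarse to rule out projectivity, whereas all other base cases admit a direct edge-count argument.
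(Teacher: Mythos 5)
Your proposal is correct, and its skeleton coincides with the paper's: the candidate list is narrowed exactly as in the paper (via $\Gamma(R,S)\subseteq C_\Gamma(R)$, Lemma~\ref{l1}(b), Lemma~\ref{t3}, Corollary~\ref{c5} and Lemma~\ref{l12}; your explicit choice $S=U(R)$ is a clean way to discharge the $\Z_3\times\Z_3$ and $(\Z_2)^\ell\times\F_4$ possibilities, which the paper leaves implicit), and the sufficiency part is identical in spirit ($C_\Gamma(\Z_5)\cong K_5$, the graph isomorphism between the $\Z_4$ and $\Z_2[x]/(x^2)$ cases, and an explicit projective drawing of $C_\Gamma(\Z_2\times\Z_4)$ --- your vertex/edge/face data $p=8$, $q=17$, $r=10$ is consistent, and the paper's Figure~\ref{f7} supplies precisely the embedding you defer, so this planned step is realizable but is the one piece you still owe). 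Where you genuinely diverge is in ruling out the remaining candidates: the paper exhibits forbidden substructures --- a subdivision of $K_{4,4}$ in $C_\Gamma(\Z_2^4)$ (Figure~\ref{f6}) and an explicit $K_{4,4}$ inside $C_\Gamma(\Z_2\times\Z_2\times\Z_4)$ --- whereas you rely throughout on the Euler-type bound of Corollary~\ref{c2} together with edge counts (your counts $35$, $53$, $53$ on $12$, $16$, $16$ vertices check out), supplemented for the delicate case $(\Z_2)^4$ by first deleting the vertex $(0,0,0,0)$ and the four weight-one vertices to pass from $(p,q)=(16,40)$, where the bound is vacuous, to a connected subgraph with $(11,31)$, where it gives $\widetilde{\gamma}\geq 2$ and lifts back by Lemma~\ref{l1}(b); this pruning computation is correct (the deleted vertices have degrees $1,2,2,2,2$ and are pairwise nonadjacent). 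Your reduction of each infinite family to its smallest member, by observing that fixing one $\Z_2$-coordinate equal to $1$ yields an induced copy of $C_\Gamma((\Z_2)^{\ell-1}\times T)$, is also sound and is a more explicit justification of the subgraph containments the paper merely asserts. The trade-off: the paper's subgraph-hunting gives structural certificates but requires ad hoc figures, while your counting route is easier to verify mechanically at the cost of the extra pruning idea for $(\Z_2)^4$, where the raw bound fails.
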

\begin{proof}
Suppose that $\widetilde{\gamma}(C_\Gamma(R))=1$. Since $\Gamma(R, S)$ is a subgraph of $C_\Gamma(R)$, by Lemma \ref{l1}(b),
 $\widetilde{\gamma}(\Gamma(R , S)) \leq 1$,  for every $S \subseteq U(R)$ with $S^{-1} \subseteq S$. Thus, by Lemma \ref{t3}, Corollary \ref{c5} and Lemma \ref{l12}, we have the following candidates for $R$:
 \begin{itemize}
          \item[(a)] $\Z_2^\ell$ where $\ell \geq 4$.
          \item[(b)] $(\Z_2)^\ell \times \Z_3$ where $\ell \geq 2$.
          \item[(c)] $(\Z_2)^\ell \times T$ where $\ell \geq 1$ and $T$ is isomorphic to $ \Z_4$ or  $\Z_2[x]/(x^2)$.
           \item[(d)] $\Z_5$.
 \end{itemize}

 {\bf Case 1}: $R \cong \Z_2^\ell$ where $\ell \geq 4$. If $\ell=4$, then Figure \ref{f6} shows that $C_\Gamma(R)$ contains a subdivision of
 $K_{4, 4}$ and so by parts (b) and (d) of Lemma \ref{l1}, $\widetilde{\gamma}(C_\Gamma(R)) \geq 2$. If $\ell \geq 5$, then $C_\Gamma(\Z_2^4)$ is a subgraph of $C_\Gamma(R)$ and so by Lemma \ref{l1}(b), $\widetilde{\gamma}(C_\Gamma(R)) \geq 2$.

 \begin{figure}[ht]
	\begin{tikzpicture}[scale=1]
	\def \a{1.5};
	\def \b{3};
	\def \d{2};
	\coordinate (a1) at (0,0);
	\fill (a1) circle (2.5pt);
	\coordinate (a2) at (0,{\a});
	\fill (a2) circle (2.5pt);
	\coordinate (a3) at (0,{\a*\d});
	\fill (a3) circle (2.5pt);
	\coordinate (a4) at (0,{\a*3});
	\fill (a4) circle (2.5pt);
	\coordinate (b1) at (\b,0);
	\fill (b1) circle (2.5pt);
	\coordinate (b2) at (\b,\a);
	\fill (b2) circle (2.5pt);
	\coordinate (b3) at (\b,{\a*\d});
	\fill (b3) circle (2.5pt);
	\coordinate (b4) at (\b,{\a*3});
	\fill (b4) circle (2.5pt);
	\coordinate (c1) at ({1/2*\b},{\a*3});
	\fill (c1) circle (2.5pt);
	\coordinate (c2) at ({1/4*\b},{\a*\d});
	\fill (c2) circle (2.5pt);
	\coordinate (c3) at ({1/4*\b},{\a*1});
	\fill (c3) circle (2.5pt);
	\node [above] at (c1) {$1100$};\node [below] at (c2) {$1010$}; \node [above] at (c3) {$1001$};
	\node [left] at (a1) {$0111$};\node [right] at (b1) {$1111$};
	\node [left] at (a2) {$0110$};\node [right] at (b2) {$1110$};
	\node [left] at (a3) {$0101$};\node [right] at (b3) {$1101$};
	\node [left] at (a4) {$0011$};\node [right] at (b4) {$1011$};

     \draw [line width=.8pt] (a1) -- (b1);  \draw [line width=.8pt] (a1) -- (b2);
     \draw [line width=.8pt] (a1) -- (b3);  \draw [line width=.8pt] (a1) -- (b4);
      \draw [line width=.8pt] (a2) -- (b1);  \draw [line width=.8pt] (a2) -- (b2);
	  \draw [line width=.8pt] (a2) -- (b3);  \draw [line width=.8pt] (a2) -- (b4);
	  \draw [line width=.8pt] (a3) -- (b1);  \draw [line width=.8pt] (a3) -- (b2);
	  \draw [line width=.8pt] (a3) -- (b3);  \draw [line width=.8pt] (a3) -- (b4);
      \draw [line width=.8pt] (a4) -- (b1);  \draw [line width=.8pt] (a4) -- (b2);
	  \draw [line width=.8pt] (a4) -- (b3);  \draw [line width=.8pt] (a4) -- (b4);
	
	
		\end{tikzpicture}
	\caption {A subgraph of $C_\Gamma(\Z_2 \times \Z_2 \times \Z_2 \times \Z_2)$.}
	\label{f6}
\end{figure}

 {\bf Case 2}: $R \cong (\Z_2)^\ell \times \Z_3$ where $\ell \geq 2$. If $\ell=2$, then it is easy to check that the graph $C_\Gamma(R)$
 has $35$ edges (see also \cite[Figure 8]{Genus+two+generalized+unit}) and so by Corollary \ref{c2}, $\widetilde{\gamma}(C_\Gamma(R))\geq 2$. If $\ell \geq 3$, then $C_\Gamma(\Z_2^4)$ is a subgraph of $C_\Gamma(R)$ and so by Lemma \ref{l1}(b),  $\widetilde{\gamma}(C_\Gamma(R)) \geq 2$.

 {\bf Case 3}: $R \cong (\Z_2)^\ell \times T$ where $\ell \geq 1$ and $T$ is isomorphic to $ \Z_4$ or  $\Z_2[x]/(x^2)$. Note that
$C_\Gamma((\Z_2)^\ell \times \Z_4) \cong C_\Gamma((\Z_2)^\ell \times \Z_2[x]/(x^2))$. Hence, it is enough to consider the case
 $R \cong (\Z_2)^\ell \times \Z_4$. If $\ell=1$, then Figure \ref{f7} shows that the graph $C_\Gamma(\Z_2 \times \Z_4)$ can be embedded in the projective plane. If $\ell=2$, then all the vertices $(1,1,0)$, $(1,1,1)$, $(1,1,2)$ and $(1,1,3)$ are adjacent to the vertices $(0,0,1)$, $(0,1,1)$, $(1,0,1)$ and $(1,0,3)$ in $C_\Gamma(R)$. Thus,  $K_{4,4}$ is a subgraph of $C_\Gamma(R)$ and so by parts (b) and (d) of Lemma \ref{l1}, $\widetilde{\gamma}(C_\Gamma(R)) \geq 2$. Finally, if $\ell \geq 3$, then $C_\Gamma(\Z_2 \times \Z_2 \times \Z_4)$ is a subgraph of $C_\Gamma(R)$ and so by Lemma \ref{l1}(b), $\widetilde{\gamma}(C_\Gamma(R)) \geq 2$.

 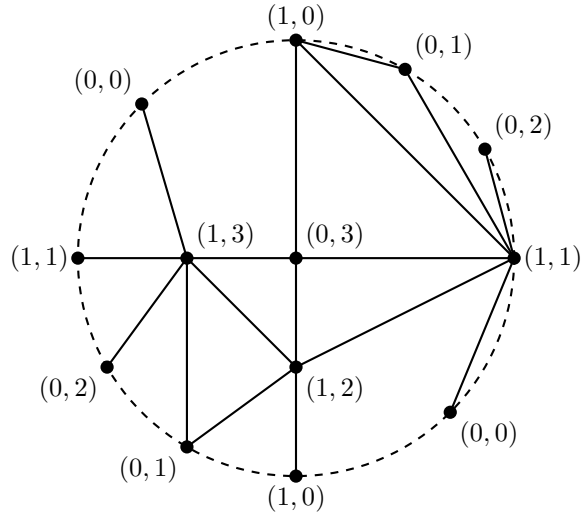
\begin{figure}[ht]
	\begin{tikzpicture}[scale=1]
	\def\r{2.9};
	\def \a{\r/2};
	\coordinate (a1) at (0,0);
	\fill (a1) circle (2.5pt);
	\coordinate (a2) at (-\a,0);
	\fill (a2) circle (2.5pt);
	\coordinate (a3) at (0,-\a);
	\fill (a3) circle (2.5pt);
	\coordinate (a4) at (0,-\r);
	\fill (a4) circle (2.5pt);
	\coordinate (a5) at (0,\r);
	\fill (a5) circle (2.5pt);
	\coordinate (a6) at (\r,0);
	\fill (a6) circle (2.5pt);
	\coordinate (a7) at (-\r,0);
	\fill (a7) circle (2.5pt);
	\coordinate (a8) at (30:\r);
	\fill (a8) circle (2.5pt);
	\coordinate (a9) at (60:\r);
	\fill (a9) circle (2.5pt);
	\coordinate (a10) at (135:\r);
	\fill (a10) circle (2.5pt);
	\coordinate (a13) at ({180+30}:\r);
	\fill (a13) circle (2.5pt);
	\coordinate (a12) at ({180+60}:\r);
	\fill (a12) circle (2.5pt);
	\coordinate (a11) at (-45:\r);
	\fill (a11) circle (2.5pt);

	\draw [line width=.8pt, dashed](0,0) circle (\r cm);
	\draw [line width=.8pt] (a1) -- (a2);
	\draw [line width=.8pt] (a1) -- (a6);
	\draw [line width=.8pt] (a1) -- (a5);
	\draw [line width=.8pt] (a1) -- (a3);
	\draw [line width=.8pt] (a3) -- (a4);
	\draw [line width=.8pt] (a3) -- (a12);
	\draw [line width=.8pt] (a3) --(a6);
	\draw [line width=.8pt] (a3) --(a2);
	\draw [line width=.8pt] (a2) --(a12);
	\draw [line width=.8pt] (a2) --(a10);
	\draw [line width=.8pt] (a2) -- (a13);
	\draw [line width=.8pt] (a2) --(a7);
	\draw [line width=.8pt] (a5) --(a9);
	\draw [line width=.8pt] (a5) --(a6);
	\draw [line width=.8pt] (a6) --(a11);
	\draw [line width=.8pt] (a6) --(a8);
	\draw [line width=.8pt] (a6) --(a9);

	
	\node [above] at (a5) {$(1, 0)$};\node [below] at (a4) {$(1, 0)$};
	\node [right] at (a6) {$(1, 1)$};\node [left] at (a7) {$(1, 1)$};
	\node [below right] at (a11) {$(0, 0)$};\node [above left] at (a10) {$(0, 0)$};
	\node [above right] at (a8) {$(0, 2)$};\node [above right] at (a9) {$(0, 1)$};
	\node [below left] at (a13) {$(0, 2)$};\node [below left] at (a12) {$(0, 1)$};
	\node [above right] at (a1) {$(0, 3)$};\node [above right] at (a2) {$(1, 3)$};
	\node [below right] at (a3) {$(1, 2)$};
	
		\end{tikzpicture}
	\caption {Embedding  of $C_\Gamma(\Z_2 \times \Z_4)$  in the projective plane.}
	\label{f7}
\end{figure}

 {\bf Case 4}: $R \cong \Z_5$. Note that $\Gamma(R, \{-1\})$ is a subgraph of $C_\Gamma(R)$ and by Lemma \ref{t0.1},
 $\Gamma(R, \{-1\}) \cong K_5$. Thus, $C_\Gamma(R)\cong K_5$ and so by Lemma \ref{l1}(c), $C_\Gamma(R)$ is projective.
\end{proof}

\bibliography{refs}{}

\end{document}